\documentclass[12 pt,english]{amsart}

\usepackage{babel}
\usepackage{amsmath}
\usepackage{amsfonts}
\usepackage{latexsym}
\usepackage{color}
\usepackage{graphicx}

\begin{document}

\newtheorem{cor}{Corollary}
\newtheorem{lem}[cor]{Lemma}
\newtheorem{pro}[cor]{Proposition}
\newtheorem{bigtheo}[cor]{Main Theorem}
\newtheorem{theo}[cor]{Theorem}
\newtheorem{sub}[cor]{Sublemma}

\title[Prescribing metrics]{Prescribing metrics on the boundary of convex cores of globally hyperbolic maximal compact AdS $3$-manifolds}
\author{Boubacar DIALLO}
\address{Institut de Math\'ematiques de Toulouse, UMR CNRS 5219, Universit\'e Toulouse 3, 118 route de Narbonne, 31062 Toulouse cedex 9, France}
\thanks{partially supported by the A.N.R. through project
ETTT, ANR-09-BLAN-0116-01, 2009-13.}
\email{boubacar.diallo.math@gmail.com}
\date{\today}

\begin{abstract}{We consider globally hyperbolic maximal anti de Sitter $3$-manifolds $M$ with a closed Cauchy surface $S$ of genus greater than one and prove that any pair of hyperbolic metrics on $S$ can be realized as the boundary metrics of the convex core of a maximal globally hyperbolic anti de Sitter $3$-manifold structure on $M$. This answers the existence part of a question of Mess about the unique realization of such metrics. Our theorem has a nice formulation purely in terms of $2$-dimensional Teichm\"uller theory and earthquakes.}
\end{abstract}

\maketitle

\tableofcontents

\section{Introduction}

\subsection{Globally hyperbolic maximal compact Anti de Sitter manifolds}

Anti de Sitter space (AdS) is the model space of Lorentzian manifolds of constant sectional curvature equal to $-1$. Its $n-$dimensional avatar can be defined as the $-1$ level set of the standard nondegenerate quadratic form of index $2$ in $\mathbb{R}^{n+1}$, with the induced quadratic form on each affine tangent subspace (it has index one on such subspaces).

An anti de Sitter $3$-manifold is thus a (smooth, connected) manifold $M$ endowed with a symmetric bilinear covariant $2$-tensor of index one, everywhere nondegenerate, whose sectional curvatures are all equal to $-1$. By classical results, such an $M$ is locally isometric to the model space $AdS_{3}$, anti de Sitter space of dimension 3. If we restrict ourselves to oriented and time oriented manifolds, it is therefore endowed with a $(G,X)$ structure, where $X$ is $AdS_{3}$ and $G$ the identity component of its isometry group. Both definition are equivalent in that case. Let's call such objects $AdS_{3}$ spacetimes.

An $AdS_{3}$ spacetime is said to be globally hyperbolic if it admits a Cauchy hypersurface: a spacelike surface which intersect every inextendable timelike line exactly once. If the spacetime has a compact Cauchy surface, then every Cauchy surface is compact. Moreover if the spacetime cannot be isometrically embedded in a stricty larger spacetime by an isometry sending a Cauchy surface to another one, then it is called globally hyperbolic maximal compact (GHMC).

$(G,X)$ manifolds (with $G$ and $X$ as above) have a well defined holonomy representation, up to conjugation, and a developing map (up to equivalence).

Recall that the identity component of the isometry group of $AdS_{3}$ identifies (up to index $2$) with $PSL_{2}(\mathbb{R}) \times PSL_{2}(\mathbb{R})$. Thus holonomies of $(G,X)$ spacetimes $M$ are represented by pairs of representations from the fundamental group of $M$ to $PSL_{2}(\mathbb{R})$. In the case where $M$ is globally hyperbolic maximal with a surface $S$ of negative Euler characteristic as a Cauchy surface, one of Mess's main theorem (~\cite[proposition 19]{MR2328921}, see also ~\cite{MR2328922}) asserts that such a pair ($\rho_{l}$, $\rho_{r}$) is a point of ${\mathcal T}(S) \times {\mathcal T}(S)$, the product of two copies of the Teichm\"uller space of $S$. Recall that ${\mathcal T}(S)$ is the space of discrete and faithful representations of $\Pi_{1}(S,x)$ (with any choice of basepoint $x$) to $PSL_{2}(\mathbb{R})$ modulo conjugation. Equivalently, it is the space of metrics on $S$ of constant sectional curvature $-1$, modulo the equivalence relation identifying two of them iff there is a diffeomorphism isotopic to the identity, which is an isometry from one metric to the other.

\subsection{Analogies with quasifuchsian hyperbolic $3$-manifolds}
Mess noted that his theorem on holonomies of $GHMC$ anti de Sitter spacetimes of dimension $3$ is the analog of the simultaneous uniformization theorem of Bers for quasifuchsian hyperbolic three manifolds ~\cite{bers}. Thurston asked whether one could uniquely prescribe the two hyperbolic metrics on the boundary of the convex core of such a manifold ~\cite{thurston-notes},~\cite{MR2235710}. So far, only existence has been proved, thanks to works of Epstein and Marden ~\cite{MR903852} on Thurston and Sullivan's $K=2$ conjecture (which happens to be false, see ~\cite{MR2153403}), and Labourie ~\cite{MR1163450}, independently.

In his work ~\cite{MR2328921}, Mess established further analogies between quasifuchsian hyperbolic $3$-manifolds and $GHMC$ $AdS_{3}$ spacetimes. Indeed such spacetimes $M$ have a well defined convex core, which as in the hyperbolic setting is the minimal non-empty closed convex subset. Except for the Fuchsian case where both upper and lower boundary metrics are equal, the convex core has two boundary components which are pleated hyperbolic surfaces. Both are thus equipped with hyperbolic metrics and measured bending laminations. In particular this defines a map $\Phi$ from the space of $GHMC$ $AdS$ structures (with Cauchy surface $S$ of fixed topological type), identified with ${\mathcal T}(S)\times {\mathcal T}(S)$, to ${\mathcal T}(S)\times {\mathcal T}(S)$, which sends a structure to the ordered pair of upper and lower boundary hyperbolic metrics. In the quasifuchsian setting, the analogous map is then onto.

\subsection{The Mess conjecture}

Mess asked whether the map $\Phi$ is one-to-one and onto, that is, whether an ordered pair of hyperbolic metrics on $S$ can be uniquely realized as the upper and lower boundary metrics of a $GHMC$ $AdS_{3}$ spacetime $M$. This is the analog of Thurston's conjecture for quasifuchsian manifolds. Uniqueness is still an open question. The present article gives a positive answer to the existence part of this conjecture of Mess.

Note that our statement, in the anti de Sitter setting, cannot be proved by methods of Epstein and Marden. Indeed, the $K$-quasiconformal constant in their theorem cannot exist in our context, because it would contradict the earthquake theorem (see next section). There's no restriction on the bending measures of our spacetimes, as opposed to the hyperbolic setting. Moreover, the analog of Labourie's theorem ~\cite[theor\`eme 1]{MR1163450} or of Labourie and Schlenker's theorem ~\cite{MR1752780} remains unknown in the $AdS$ setting.

\subsection{Relation to Teichm\"uller theory and earthquakes}

Earthquakes were defined by Thurston in ~\cite{MR903860} by extension to measured laminations of the case of simple closed curved.
Let $E_{\lambda}^{l}$ and $E_{\lambda}^{r}$ be the left and right earthquakes along a measured lamination $\lambda$. Thurston proved that those two maps from ${\mathcal T}(S)$ to itself are (continuous and) bijective and in fact are inverse to each other. His earthquake theorem asserts that for any $m$ and $m'$ in ${\mathcal T}(S)$, there is a unique measured lamination $\lambda$ (resp. $\lambda'$) such that the left (resp. right) earthquake along $\lambda$ (resp. $\lambda'$) sends $m$ to $m'$.

Mess rephrased this earthquake theorem in the context of pleated surfaces in $AdS_{3}$ geometry.

Let $S$ be a closed surface of negative Euler characteristic, $M$ a globally hyperbolic maximal $AdS_{3}$ spacetime with $S$ as a Cauchy surface, ($\rho_{l}$,$\rho_{r}$) the holonomy of $M$. Let $\lambda_{+}$, $\lambda_{-}$ the upper and lower pleating laminations of the convex core of $M$, $m_{+}$ and $m_{-}$ the corresponding boundary hyperbolic structures.

Then $$\begin{array}{ll}\rho_{l}= E_{\lambda_{+}}^{l}(m_{+}) \\

\rho_{r}= E_{\lambda_{+}}^{r}(m_{+}) \\

\rho_{l}= E_{\lambda_{-}}^{r}(m_{-}) \\

\rho_{r}= E_{\lambda_{-}}^{l}(m_{-})~. \end{array} $$

\begin{figure}[ht]

\begin{center}
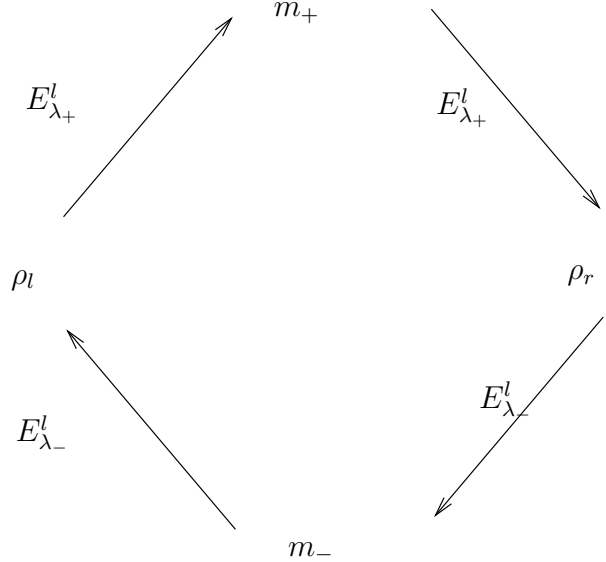
\caption{Mess diagram}
\end{center}

\end{figure}

Thanks to Thurston's theorem, our map $\Phi$ of the previous section is continuous. Via the earthquake theorem, surjectivity of $\Phi$ is thus equivalent to the following statement:

\textbf{Prescribing middle points of 2 intersecting earthquakes paths}
For any two points $m_{+}$ and $m_{-}$ in ${\mathcal T}(S)$, there are left and right earthquakes, joining two points say $\rho_{l}$ and $\rho_{r}$ in ${\mathcal T}(S)$, such that $m_{+}$ and $m_{-}$ are the middle points of the corresponding earthquake paths.

Again we fail to prove uniqueness.

\section{Basic results and strategy of the proof}

\subsection{Anti de Sitter geometry in dimension $3$ and globally hyperbolic manifolds}

Anti de Sitter $3$-space is the model space of Lorentzian $3$-manifolds of constant sectional curvature equal to $-1$. Like hyperbolic space, it has a boundary at infinity, Einstein space of dimension $2$, with a conformal Lorentzian structure, and it has several equivalent definitions (which give isometric spaces).

Apart from the quadric model we already mentioned, in dimension $3$, it is also the Lie group $SL_{2}\mathbb{R}$ with its induced Killing metric. The Lie group model allows us to identify the group of isometries of $AdS_{3}$ with $$SL_{2}\mathbb{R} \times SL_{2}\mathbb{R}/J~,$$ where $J$ is the subgroup of order $2$ generated by the matrix couple $(I_{2},-I_{2})$. Therefore this isometry group is a double cover of $PSL_{2}\mathbb{R} \times PSL_{2}\mathbb{R}$. The space of holonomies of $AdS_{3}$ spacetimes (modulo conjugation) is thus a double cover of the space of representations of $\pi_{1}(M)$ to $PSL_{2}\mathbb{R} \times PSL_{2}\mathbb{R}$ (modulo conjugation). A point in that representation space will be called a pair of left and right holonomy. 

A theorem of Mess asserts that, for a GHMC $AdS_{3}$ spacetime M, with Cauchy surface $S$, the left and right holonomies are both discrete, faithful representations which act (properly discontinuously and) cocompacty on the hyperbolic plane: these are Fuchsian representations. Conversely Mess proved that to such a pair of Fuchsian representations is the holonomy pair of a unique GHMC $AdS_{3}$ manifold $M$ homeomorphic to $S \times \mathbb{R}$ (in particular, $\pi_{1}(x,M)$ and $\pi_{1}(x,S)$, with suitable choice of basepoint $x$, are isomorphic). This is the exact analog of Bers theorem for quasifuchsian hyperbolic $3$-manifold. This gives a natural homeomorphism between the space of GHMC $AdS_{3}$ structures (with orientation and time orientation) and ${\mathcal T}(S)\times {\mathcal T}(S)$.

\subsection{How to show that $\Phi$ is onto}

Recall that $\Phi$ is the map from ${\mathcal T}(S) \times {\mathcal T}(S)$ to itself which associates to a GHMC structure on a manifold M with $\pi_{1}(M) \cong \pi_{1}(S)$ the ordered pair of upper and lower boundary metrics on its convex core. Thanks to the earthquake theorem, $\Phi$ is continuous. We just need to show that it is a proper map, and since it will have a well defined degree, that it is a map of degree one. It then follows that $\phi$ is onto.

Properness theorem:

With notations as in the previous sections, if $m_{+}$ lies in a compact set in ${\mathcal T}(S)$, $m_{+}$ and $m_{-}$ lie in the image of $\Phi$, if preimages $\rho_{l}$ and $\rho_{r}$ in ${\mathcal T}(S)$ are such that $\rho_{l}$ leaves every compact subset of ${\mathcal T}(S)$, then so does $m_{-}$.

We may rephrase it with sequences in ${\mathcal T}(S)$ instead of compact sets:

if $(m^{n}_{+})_{n \geq 0}$ is a sequence in ${\mathcal T}(S)$ converging to $m^{\infty}_{+}$ in ${\mathcal T}(S)$, $(m^{n}_{-})_{n \geq 0}$, $(\rho^{n}_{l})_{n \geq 0}$ and $(\rho^{n}_{r})_{n \geq 0}$ are sequences in ${\mathcal T}(S)$ such that $(\rho^{n}_{r})_{n \geq 0}$ tends to $\infty$, then $(m^{n}_{-})_{n \geq 0}$ also tends to $\infty$.

In our context, we can (equivalently) replace "`$(\rho^{n}_{r})_{n \geq 0}$ tends to $(\infty$" by "`$l_{m^{n}_{+}}(\lambda^{n}_{+}))$ tends to $\infty$". See next section for further details.

Degree theorem:

$\Phi$ is properly homotopic to a homeomorphism. Since a homeomorphism has degree one, it then follows that our map $\Phi$ has degree one.

\section * {Acknowledgements}

I'd like to thank Jean Marc Schlenker for his grateful help and guidance during this work. I would also like to thank F.Bonsante, T.Barbot, C.Series, S.Kerckhoff, J.Danciger for fruitful discussions.

\section{Outline of the proof of the properness theorem}

Our main result here is the following theorem:

\begin{theo}[properness of $\Phi$]
Let $\Phi$ be the map defined in the previous section.
Then $\Phi$ is a proper map.

\end{theo}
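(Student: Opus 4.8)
The plan is to argue by contradiction, reducing the properness of $\Phi$ to the sequential statement recalled in the previous section. Suppose $\Phi$ is not proper. Then there is a sequence of GHMC structures, encoded by holonomies $(\rho^n_l,\rho^n_r)$, whose images $(m^n_+,m^n_-)=\Phi(\rho^n_l,\rho^n_r)$ remain in a fixed compact subset of $\mathcal{T}(S)\times\mathcal{T}(S)$ while $(\rho^n_l,\rho^n_r)$ leaves every compact set. Passing to a subsequence we may assume $m^n_+\to m^\infty_+$ and $m^n_-\to m^\infty_-$ in $\mathcal{T}(S)$, and that at least one of $\rho^n_l,\rho^n_r$ tends to $\infty$. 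The four earthquake identities of the Mess diagram are carried to themselves, after the exchange $\rho_l\leftrightarrow\rho_r$ and $E^l\leftrightarrow E^r$, by the symmetry induced by reversing the orientation of $S$; this symmetry fixes $m_+$ and $m_-$ and so preserves both the hypothesis on $m_+$ and the conclusion $m_-\to\infty$. Hence it is enough to treat the case $\rho^n_r\to\infty$, that is, to prove exactly the sequential statement.

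First I would convert the escape of $\rho^n_r$ into a statement about the upper bending lamination. By the Mess diagram, $\rho^n_r=E^r_{\lambda^n_+}(m^n_+)$. Since $m^n_+$ stays in a compact set and, for fixed $m$, the length function $\lambda\mapsto l_{m}(\lambda)$ is proper on the space of measured laminations $\mathcal{ML}(S)$, a uniform bound on $l_{m^n_+}(\lambda^n_+)$ would confine $\lambda^n_+$ to a compact subset of $\mathcal{ML}(S)$; continuity of the earthquake map $(m,\lambda)\mapsto E^r_\lambda(m)$ would then keep $\rho^n_r$ bounded, contrary to hypothesis. Thus $l_{m^n_+}(\lambda^n_+)\to\infty$, which is the equivalence announced earlier. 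Applying the same reasoning to $\rho^n_l=E^l_{\lambda^n_+}(m^n_+)$ shows that in fact both holonomies escape, a consistency check I will record but not otherwise need.

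The core of the proof is then to show that $m^n_+$ bounded together with $l_{m^n_+}(\lambda^n_+)\to\infty$ forces $m^n_-\to\infty$. I would again argue by contradiction: suppose, after a further subsequence, that $m^n_-$ stays bounded and converges to $m^\infty_-$. Applying the equivalence of the preceding step to the lower boundary, where $\rho^n_r=E^l_{\lambda^n_-}(m^n_-)$ with $m^n_-$ bounded and $\rho^n_r\to\infty$, yields $l_{m^n_-}(\lambda^n_-)\to\infty$; so both bending laminations become long relative to their respective bounded boundary metrics. I would then normalize and pass to projective limits $[\lambda^n_+]\to[\lambda^\infty_+]$ and $[\lambda^n_-]\to[\lambda^\infty_-]$ in $\mathbb{P}\mathcal{ML}(S)$, using compactness, and analyze the limiting form of the four earthquake relations. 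The goal is to exhibit an incompatibility between the two earthquake paths joining $\rho^n_l$ to $\rho^n_r$: one has midpoint $m^n_+$ and is measured along $\lambda^n_+$, the other has midpoint $m^n_-$, with the left/right roles exchanged, and is measured along $\lambda^n_-$, and these two paths cannot both keep their midpoints in a compact set while their total measures diverge.

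The main obstacle is precisely this last incompatibility. That both metrics $m^n_\pm$ stay bounded while both laminations grow is not contradictory at the purely combinatorial level of earthquakes, so the decisive input must be quantitative and geometric. I expect the key lemma to be an estimate, proved from the AdS geometry of the convex core and the interaction of its upper and lower pleated boundary components, controlling the lower data against the upper lamination—for instance bounding $i(\lambda^n_+,\lambda^n_-)$, or $l_{m^n_-}(\lambda^n_+)$, in terms of the bounded metrics $m^\infty_+$ and $m^\infty_-$—so that the simultaneous divergence of $l_{m^n_+}(\lambda^n_+)$ and $l_{m^n_-}(\lambda^n_-)$ is impossible. Making such an estimate uniform along a degenerating sequence, while the measured laminations converge only projectively and the earthquakes must be controlled near the boundary of Teichm\"uller space, is the technical crux of the argument.
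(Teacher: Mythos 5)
Your proposal reproduces correctly the \emph{reduction} that the paper itself carries out: passing to the sequential formulation, using the Mess diagram and the properness of length functions (uniformly over a compact set of metrics) together with continuity of earthquakes to convert ``$\rho^{n}_{r}\to\infty$'' into ``$l_{m^{n}_{+}}(\lambda^{n}_{+})\to\infty$''. Up to that point you and the paper agree. But everything after that is missing: you explicitly defer the decisive step to an unproved ``key lemma'' whose existence and form you only conjecture. That lemma is precisely the paper's Proposition 5: for $K\subset{\mathcal T}(S)$ compact and any $\epsilon>0$ there is $A>0$ such that $m_{+}\in K$ and $l_{m_{+}}(\lambda_{+})\geq A$ imply $l_{m_{-}}(\lambda_{+})\leq \epsilon\, l_{m_{+}}(\lambda_{+})$. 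Once this is known, properness follows in three lines (if $m^{n}_{-}$ stayed in a compact set, lengths with respect to $m^{n}_{-}$ and $m^{n}_{+}$ would be uniformly bi-Lipschitz comparable, contradicting the estimate for $\epsilon$ small); no contradiction argument with projective limits of laminations is needed. The entire content of the theorem lives in that estimate, and it occupies all of the paper's Sections 3--4: a counting/area argument on hyperbolic surfaces showing that for a long bending curve most points have large left and right intersection numbers (Proposition 2), a two-dimensional AdS computation showing that at such points the timelike geodesic to the lower boundary of the convex core has length close to $\pi/2$ (Proposition 4, via support lines degenerating to lightlike lines and the dual line at distance $\pi/2$), and finally the contraction factor $\cos(\pi/2-\epsilon')$ of the orthogonal projection onto the lower boundary, together with a comparison lemma for lengths of nested convex spacelike arcs. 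None of this geometry appears in your proposal, so what you have is a correct framing plus an acknowledged hole where the proof should be.

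A secondary point: the route you sketch for closing the gap would not work as stated. Deducing $l_{m^{n}_{-}}(\lambda^{n}_{-})\to\infty$ and passing to limits $[\lambda^{\infty}_{\pm}]$ in $\mathbb{P}{\mathcal ML}(S)$ cannot by itself produce a contradiction, because earthquake relations do not pass to projective limits (an earthquake along a diverging sequence of laminations has no limit), and, as you yourself observe, the simultaneous boundedness of $m^{n}_{\pm}$ with divergence of both laminations is not excluded by any purely two-dimensional earthquake combinatorics --- excluding it \emph{is} the theorem. The input has to be three-dimensional: it is the AdS geometry of the convex core (the $\pi/2$ bound on its timelike width and the resulting length contraction between its two boundary components) that breaks the symmetry between the upper and lower data, and that is exactly the ingredient your proposal lacks.
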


We prove this theorem with $3$ propositions, which will be proven in the next section.

The first one is a slight refinement of Lemma $4.2$ of ~\cite{MR2913100}.

Let $S$ be a closed surface of negative Euler characteristic, $\alpha_{0}>0$. Let $g \in {\mathcal T}(S)$ and let $c$ a closed geodesic for $g$. For any point $x \in c$, let $g_{x}^{r}(\alpha_{0})$ (resp. $g_{x}^{l}(\alpha_{0})$) be the geodesic segment of length $\alpha_{0}$ (for the metric $g$), orthogonal at $x$ to the right (resp. the left) of $c$. Let $n_{l}(x)$ (resp. $n_{r}(x)$) be the intersection number of $g_{x}^{r}(\alpha_{0})$ (resp. $g_{x}^{l}(\alpha_{0})$) with $c$, including $x$.

\begin{pro}
There exists a constant $l_{0}$ (depending only on $\alpha_{0}$ and the genus of $S$) such that for all $\delta_{0}>0$, there exists some $\beta_{0}>0$ (depending on $\alpha_{0}$, $\delta_{0}$ and the genus of $S$) such that:
$$l_{g}(\{x \in c : \inf(n_{l}(x), n_{r}(x)) \leq \beta_{0}l_{g}(c)\}) \leq \delta_{0}l_{g}(c) + l_{0}.$$

\end{pro}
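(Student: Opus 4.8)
The plan is to work in Fermi coordinates along $c$ and to control the area swept by the orthogonal segments, exploiting that the total area of $(S,g)$ is fixed by Gauss--Bonnet. Write $L:=l_g(c)$ and let $A=2\pi|\chi(S)|$ be the area of $(S,g)$, which depends only on the genus. Consider the normal exponential map $F$ from $c$ to the right: in Fermi coordinates $(s,t)$, with $s$ arclength along $c$ and $t$ the signed distance, the pullback metric is $dt^{2}+\cosh^{2}(t)\,ds^{2}$, so the area element is $\cosh(t)\,ds\,dt$ and $F$ has no focal point. Hence the right band $F(c\times[0,\alpha_{0}])$ is swept with total area, counted with multiplicity, equal to $\int_{c}\int_{0}^{\alpha_{0}}\cosh(t)\,dt\,ds=L\sinh(\alpha_{0})$, and symmetrically on the left. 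Since $\{\inf(n_{l},n_{r})\le\beta_{0}L\}=\{n_{l}\le\beta_{0}L\}\cup\{n_{r}\le\beta_{0}L\}$, it suffices to bound the length of $E:=\{x\in c:\ n_{l}(x)\le\beta_{0}L\}$, the left estimate being symmetric and the factor two being absorbed into the constants.

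The heuristic driving the proof is that few crossings is exceptional: the multiplicity function $m_{r}(p)=\#F^{-1}(p)$ has average value $L\sinh(\alpha_{0})/A$ over $S$, which grows linearly in $L$ because $A$ is fixed; thus the right band overlaps itself heavily, and a typical ray should meet $c$ a number of times comparable to $L$. A point $x$ with $n_{l}(x)$ small is one whose right ray crosses $c$ atypically rarely, equivalently one for which the plaques cut on the fiber $\{x\}\times[0,\alpha_{0}]$ by $F^{-1}(c)$ are long. The goal is to convert the global, multiplicity-counted area identity into an upper bound on $l_g(E)$.

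I would separate the two terms of the estimate as follows. For the additive constant $l_{0}$, let $r(x)$ denote the first-return distance of the right ray to $c$, and consider the sub-band $\{F(x,t):\ 0\le t\le\min(r(x),\alpha_{0})\}$ lying below the first return. Two geodesics issued orthogonally from distinct points of the same geodesic are ultraparallel in $\mathbb{H}^{2}$, so, after controlling the finitely many lifts of $c$ that can interfere within distance $\alpha_{0}$, this sub-band is embedded and has area at most $A$; this yields $\int_{c}\sinh(\min(r(x),\alpha_{0}))\,ds\le A$. In particular the set $\{n_{l}=1\}$, where the ray never returns, has length at most $A/\sinh(\alpha_{0})$, a constant depending only on $\alpha_{0}$ and the genus, which furnishes $l_{0}$. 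For the proportional term $\delta_{0}L$, summing the embedded-plaque bound over successive returns, the aim is an inequality of the form $l_g(\{n_{l}\le k\})\le C(\alpha_{0},\,\text{genus})\,k+l_{0}$; taking $k=\beta_{0}L$ and $\beta_{0}=\delta_{0}/C$ then gives $l_g(E)\le\delta_{0}L+l_{0}$.

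The \textbf{main obstacle} is precisely the passage from the multiplicity-counted area $L\sinh(\alpha_{0})$ to a genuine area bound. The bands are far from embedded, so one must isolate the embedded plaques lying between consecutive crossings of $c$ and show that their mutual overlaps are controlled, producing the linear-in-$k$ estimate above. The subtle requirement, and the reason this is a genuine refinement of Lemma $4.2$ of \cite{MR2913100}, is that \emph{all} constants must depend only on $\alpha_{0}$ and the genus and never on the systole: a short closed geodesic carries an arbitrarily wide collar, and the bookkeeping must prevent the collar's length from leaking into $l_{0}$. Keeping the constants tied to $A$ and $\alpha_{0}$ alone, through the Fermi-coordinate area computation and the ultraparallelism of orthogonal rays, is where the real work lies.
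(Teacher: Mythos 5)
Your overall framework --- the normal exponential map along $c$, the area of the band counted with multiplicity compared against the Gauss--Bonnet area $A=2\pi|\chi(S)|$, and the hope of bounding that multiplicity by the number of returns --- is the same skeleton as the paper's proof. But the step you defer to the end as the ``main obstacle'' is the entire mathematical content of the proposition, and the embeddedness claim you propose to rest it on is false. Ultraparallelism only excludes overlaps between rays issued orthogonally from the \emph{same} lift of $c$ in $\mathbb{H}^{2}$. Rays issued from two \emph{distinct} disjoint lifts $\tilde c$, $\tilde c'$ can meet without either ray ever crossing the other lift: take two lifts that face each other (each lies on the left of the other) with closest-point distance less than $2\alpha_{0}$, and two starting points placed mirror-symmetrically with respect to the equidistant geodesic $M$ between them; the two left-going orthogonal rays meet at a point of $M$, and for suitable placement this meeting occurs at depth less than $\alpha_{0}$ while neither ray ever reaches the other lift, no matter how far it is extended. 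Such configurations are exactly the regime of the proposition: a geodesic with $l_{g}(c)$ large must come within distance roughly $A/l_{g}(c)\ll\alpha_{0}$ of itself, and the number of such close encounters grows with $l_{g}(c)$, so the interference cannot be absorbed into the additive constant $l_{0}$. Consequently the first-return band of width $\alpha_{0}$ is not embedded, the inequality $\int_{c}\sinh(\min(r(x),\alpha_{0}))\,ds\le A$ is unjustified, and the linear-in-$k$ bound $l_{g}(\{n_{l}\le k\})\le Ck+l_{0}$ that your choice of $\beta_{0}$ depends on is never established. As written, the proposal is a correct reduction of the proposition to an unproved (and, in the form stated, false) geometric claim.

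The paper's resolution of precisely this obstacle is a small but decisive change of parameters: one does not work in the band of width $\alpha_{0}$, but in a much thinner collar of width $\gamma_{0}=\gamma_{0}(\alpha_{0})$, where $\gamma_{0}$ is produced by a hyperbolic trigonometry sublemma asserting that if a point lies within distance $\gamma_{0}$ of two disjoint lines (in the component of the complement bounded by both), then the orthogonal segment of length $\alpha_{0}$ from one line through that point must cross the other line. With the width shrunk to $\gamma_{0}$, every overlap of the band over a point of the bad set $\overline c=\{x\in c:\ n_{l}(x)\le\beta_{0}l_{g}(c)\}$ forces a crossing that is counted by $n_{l}$, so the multiplicity of $\exp$ on $\overline c\times[0,\gamma_{0}]$ is at most $\lfloor\beta_{0}l_{g}(c)\rfloor+1$; since $\exp$ increases areas, $\gamma_{0}\,l_{g}(\overline c)\le\bigl(\beta_{0}l_{g}(c)+1\bigr)2\pi|\chi(S)|$, and choosing $\beta_{0}<\delta_{0}\gamma_{0}/(2\pi|\chi(S)|)$ and $l_{0}=2\pi|\chi(S)|/\gamma_{0}$ gives the statement. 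In short: the deep crossings (at depth comparable to $\alpha_{0}$) that break your embeddedness claim are simply cut away by shrinking the collar, and the shallow crossings that remain are exactly the ones your intersection count can see. Without this idea, or some substitute for it, your argument does not close.
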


What the proposition says is: if the length of $c$ is large enough, for most points $x$ of $c$, the left (resp. right) going arc orthogonal to $c$ at $x$, of fixed length $\alpha_{0}$ intersects $c$ a lot.

By density of weighted multicurves in ${\mathcal ML}(S)$ and continuity of length functions ~\cite{MR787659} and intersection numbers, we get a similar version for more general measured laminations on $S$, namely:

\begin{cor}
With the same notations as above, let $\lambda \in {\mathcal ML}(S)$ be a measured lamination whose support is a simple closed curve $c$. Then there exists a constant $l_{0}$ (depending only on $\alpha_{0}$ and the genus of $S$) such that for all $\delta_{0}>0$, there exists some $\beta_{0}>0$ (depending on $\alpha_{0}$, $\delta_{0}$ and the genus of $S$) such that:
$$l_{g}(\{x \in c : \inf(i(g^{l}_{x}(\alpha_{0}),\lambda), i(g^{r}_{x}(\alpha_{0}),\lambda)) \leq \beta_{0}l_{g}(\lambda)\}) \leq \delta_{0}l_{g}(\lambda) + l_{0}.$$
\end{cor}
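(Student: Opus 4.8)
The plan is to reduce the corollary to the proposition by exploiting the homogeneity of all the objects involved under scaling of the transverse measure. Since the support of $\lambda$ is a single simple closed curve $c$, we may write $\lambda = w\,c$ for a unique weight $w>0$, where $c$ carries its $g$-geodesic representative. First I would record the elementary identities
$$l_g(\lambda) = w\, l_g(c), \qquad i\bigl(g_x^{l}(\alpha_0),\lambda\bigr) = w\, n_r(x), \qquad i\bigl(g_x^{r}(\alpha_0),\lambda\bigr) = w\, n_l(x),$$
valid for every $x \in c$. The first is the very definition of the length of a measured lamination; the last two express that the geometric intersection pairing of a transverse arc with a weighted geodesic is the weight times the number of transverse crossings (matching the left/right convention of the proposition). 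All three are instances of the compatibility, on weighted multicurves, of the lamination-theoretic length function and intersection pairing with the naive geometric quantities, which is exactly what the density of weighted multicurves in ${\mathcal ML}(S)$ together with the continuity of length functions and intersection numbers provides.

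Next I would substitute these identities into the set appearing in the corollary. The key observation is that the weight $w$ cancels: for any $\beta_0$, the condition $\inf\bigl(i(g_x^{l}(\alpha_0),\lambda),\, i(g_x^{r}(\alpha_0),\lambda)\bigr) \le \beta_0\, l_g(\lambda)$ is equivalent, after dividing through by $w$, to $\inf\bigl(n_l(x),n_r(x)\bigr) \le \beta_0\, l_g(c)$. Hence the set whose $g$-length we must estimate is \emph{identical} to the set already controlled by the proposition. I would then apply the proposition with its length parameter set to $\delta_0\,w$, obtaining a value of $\beta_0$ for which this common set has $g$-length at most $\delta_0\, w\, l_g(c) + l_0 = \delta_0\, l_g(\lambda) + l_0$, which is precisely the desired bound; the constant $l_0$ is inherited unchanged from the proposition since it does not depend on the length parameter.

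The only genuinely delicate point — and the one I expect to be the main obstacle — is the bookkeeping of the weight $w$ in the constants. Because both sides of the target inequality are measured against $l_g(\lambda) = w\,l_g(c)$ rather than against $l_g(c)$, one must either restrict to normalized laminations with $w \ge 1$, in which case the proposition's bound $\delta_0\,l_g(c)+l_0$ already dominates the required $\delta_0\,l_g(\lambda)+l_0$ for the \emph{same} $\beta_0$ depending only on $\alpha_0,\delta_0$ and the genus, or else let $\beta_0$ absorb the weight through the substitution $\delta_0 \mapsto \delta_0 w$; in the regime in which the corollary is ultimately applied, namely $l_g(\lambda)\to\infty$, this causes no loss. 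I anticipate no further obstruction: once the intersection-number identity is justified by density of weighted multicurves and continuity, the remainder is the scaling-invariance of the defining set and a direct appeal to the proposition.
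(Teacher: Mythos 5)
Your proof is correct and takes essentially the same route as the paper, which disposes of this corollary in one line by citing the scaling identity $l_{g}(\lambda)=w\,l_{g}(c)$ together with the scaling of intersection numbers (the appeal to density of weighted multicurves and continuity is really only needed for the later extension to general laminations), i.e.\ exactly the cancellation-of-$w$ argument you spell out. Your observation that for $w<1$ one must either let $\beta_{0}$ absorb the weight (via $\delta_{0}\mapsto\delta_{0}w$, making $\beta_{0}$ depend on $\lambda$) or note that the regime of application has $l_{g}(\lambda)\to\infty$ is a genuine subtlety that the paper's terse treatment glosses over, and your handling of it is sound.
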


Recall that $l_{g}(\lambda)= wl_{g}(c)$ if $\lambda$ has weight $w$ and support the simple closed curve $c$~(see ~\cite{MR787659}).

The next proposition says that for points of $c$ (on $\partial_{+}C$) satisfying the reverse inequality (i.e. $\inf(n_{l}(x), n_{r}(x)) > \beta_{0}l_{g}(c)\}$), the distance to the opposite boundary component of the convex core is near $\pi/2$. Cutting $c$ by an orthogonal (totally geodesic) plane at such points, the assertion is about $2$-dimensional AdS geometry.

Let $C$ be a convex subset of $AdS_{2}$, with spacelike boundary, whose frontier in $AdS_{2} \cup \partial_{\infty}AdS_{2}$ consists of $2$ points, one on each component of $\partial_{\infty}AdS_{2}$. Let $\alpha_{0}>0$, $x$ be a point of the upper boundary component $\partial_{+}C$ of $C$, $\Pi_{x}$ a support line of $C$ at $x$, $P_{l,x}$ (resp. $P_{r,x}$) a support line of $C$ at the point of $\partial_{+}C$ at distance $\alpha_{0}$ from the right (resp. the left) of $x$. We denote by $\phi_{r,x}$ (resp. $\phi_{l,x}$) the angle between $\Pi_{x}$ and $P_{r,x}$ (resp. $P_{l,x}$). Let $\tau$ be a time-like geodesic orthogonal at $x$ to $\Pi_{x}$.

\begin{pro}
For all $\epsilon>0$, there exist $A>0$, $\eta_{0} > 0$ such that if $$\inf(\phi_{r,x},\phi_{l,x}) \geq A$$ and $$\alpha_{0} \leq \eta_{0}$$ then $$l(\tau \cap C)\geq \pi/2-\epsilon.$$
\end{pro}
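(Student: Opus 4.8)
The plan is to argue by contradiction using a normal-families (compactness) argument, reducing everything to a clean limiting configuration in $AdS_2$. First I would fix a convenient model: work in the conformal ``strip'' model of (the universal cover of) $AdS_2$, with coordinates $(\rho,t)$, $\rho\in(-\pi/2,\pi/2)$, and metric $\frac{1}{\cos^2\rho}(-dt^2+d\rho^2)$, the two components of $\partial_\infty AdS_2$ being $\{\rho=\pm\pi/2\}$. In this model $\partial_t$ is a Killing field, so every slice $\{t=c\}$ is a spacelike totally geodesic line, every vertical line $\{\rho=c\}$ is a timelike geodesic, and along $\{\rho=0\}$ proper time equals $|t|$. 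Since the isometry group acts transitively on pointed frames, I can normalize by an isometry so that $x$ is the center, $\Pi_x$ is the slice $\{t=0\}$, and $\tau$ is the vertical geodesic $\{\rho=0\}$ pointing into the past. The dual (focal) point of $\Pi_x$ is then $z=(0,-\pi/2)$, at timelike distance exactly $\pi/2$; this is the value the conclusion asks $l(\tau\cap C)$ to approach.

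Suppose the statement fails. Then there is $\epsilon_0>0$ and a sequence of configurations $C^n$, all normalized as above, with $\inf(\phi^n_{r,x},\phi^n_{l,x})\to\infty$ and $\alpha^n_0\to0$, but with exit point $y^n=\tau\cap\partial_-C^n$ satisfying $d(x,y^n)\le\pi/2-\epsilon_0$. After passing to a subsequence, $y^n\to y_\infty=(0,t_y)$ with $-t_y=d(x,y_\infty)\le\pi/2-\epsilon_0$, so $t_y>-\pi/2$. The key geometric input is the behaviour of the two upper support lines $P^n_{r,x},P^n_{l,x}$: they pass through the neighbouring points $x^n_r,x^n_l$, which converge to $x$ because $\alpha^n_0\to0$, and they make hyperbolic angle (rapidity) tending to $\infty$ with $\Pi_x$. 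A spacelike geodesic through a point tending to $x$ whose rapidity against a fixed spacelike direction tends to $\infty$ converges to a \emph{null} geodesic through $x$; the right and left families converge to the two distinct past null lines $\{t=-\rho\}$ and $\{t=\rho\}$ through $x$. Since each $C^n$ lies in the common past of $\Pi_x,P^n_{r,x},P^n_{l,x}$, any subsequential (Hausdorff-on-compacta) limit $C^\infty$ lies in the intersection of the two corresponding past half-planes, namely the causal past $J^-(x)=\{t\le-|\rho|\}$.

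Now I would extract the contradiction from convexity and the ideal-point condition. Because $C^\infty\subseteq\{t\le-|\rho|\}$ while $C^\infty$ must still reach both components of $\partial_\infty AdS_2$, its two ideal points $p_L,p_R$ lie one on each component $\{\rho=\pm\pi/2\}$ with $t\le-\pi/2$; hence $p_L,p_R$ lie in $J^-(\{t=-\pi/2\})=\{t\le-\pi/2\}$, the past of the geodesic slice $\{t=-\pi/2\}$. The spacelike geodesic $\gamma$ joining $p_L$ to $p_R$ is contained in $\overline{C^\infty}$ (a convex set contains the geodesic between any two of its ideal points), and since $\{t\le-\pi/2\}$ is geodesically convex, $\gamma\subseteq\{t\le-\pi/2\}$. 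Intersecting $\gamma$ with $\tau=\{\rho=0\}$ produces a point of $\overline{C^\infty}\cap\tau$ at height $t\le-\pi/2$, and because $y_\infty$ is the lowest point of $C^\infty$ along $\tau$ we get $t_y\le-\pi/2$, i.e. $d(x,y_\infty)\ge\pi/2$. This contradicts $d(x,y_\infty)\le\pi/2-\epsilon_0$ and proves the proposition.

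I expect the main obstacle to be the limiting step: making precise that the normalized convex sets $C^n$ subconverge to a convex set $C^\infty$ with the asserted containment and ideal points. One must (i) justify the degeneration of the spacelike support lines to the two null lines and identify the correct (past) side, (ii) control the sets near $\partial_\infty AdS_2$ so that the ideal points do not escape in a way that invalidates the slice argument, and (iii) handle the convergence of unbounded convex sets in $AdS_2$, e.g. via their support functions. The remaining ingredients---the strip model, the fact that constant-$t$ slices are geodesics, the distance-$\pi/2$ focal point, and the convexity of causal pasts---are elementary $AdS_2$ geometry.
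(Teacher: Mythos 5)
Your geometric picture coincides with the paper's: as $\alpha_0\to0$ and $\inf(\phi_{l,x},\phi_{r,x})\to\infty$, the upper support lines degenerate to the two past null lines through $x$, and the barrier at timelike distance $\pi/2$ from $x$ (your slice $\{t=-\pi/2\}$, i.e.\ the dual line $\Pi^{*}$ of $x$) is what forces $l(\tau\cap C)$ to approach $\pi/2$. But your implementation by contradiction through a Hausdorff limit $C^\infty$ has a genuine gap at the transfer step: from ``$\gamma\cap\tau$ is a point of $\overline{C^\infty}\cap\tau$ at height $t\le-\pi/2$'' you conclude $t_y\le-\pi/2$ ``because $y_\infty$ is the lowest point of $C^\infty$ along $\tau$''. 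That assertion does not follow from Hausdorff convergence: the slice of a Hausdorff limit of convex sets along a fixed line can be strictly larger than the limit of the slices, because points of $C^\infty$ on $\tau$ need only be limits of points of $C^n$ lying \emph{off} $\tau$. For instance, the Euclidean triangles $T_n$ with vertices $(0,0)$, $(1/n,-1)$, $(1,-1)$ converge on compacta to the triangle with vertices $(0,0)$, $(0,-1)$, $(1,-1)$, yet $T_n\cap(\{0\}\times\mathbb{R})=\{(0,0)\}$ for every $n$ while the limit meets $\{0\}\times\mathbb{R}$ in a segment of length $1$. So a priori the exit points $y^n$ could stay at height $-\pi/2+\epsilon_0$ even though the limit set is deep along $\tau$; this is exactly what has to be ruled out, and none of your caveats (i)--(iii) addresses it.

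The fix is to run your chord argument at finite $n$ instead of on the limit set, and this is essentially the paper's proof. Since $C^n$ lies in the past of $P^n_{l,x}$ and $P^n_{r,x}$, its two ideal points lie at or below the two past ideal endpoints of these lines, which converge to $(\mp\pi/2,-\pi/2)$; since the upper boundary of $C^n$ is a convex spacelike (hence achronal) curve through $x=(0,0)$, the ideal points also lie at height $\ge-\pi/2$. They are therefore squeezed towards the corners $(\mp\pi/2,-\pi/2)$, so the chord $\gamma^n$ joining them converges to the dual line $\{t=-\pi/2\}$. Since $\gamma^n\subset\overline{C^n}$ by convexity and $\overline{C^n}\cap\tau$ is the segment from $x$ to $y^n$, the point $\gamma^n\cap\tau$ forces $d(x,y^n)\ge d(x,\gamma^n\cap\tau)\to\pi/2$, with no contradiction argument and no limit of sets needed. (The paper phrases this with the line $\Pi'$ joining the past ideal endpoints of the two support lines: convexity gives $l(\tau\cap C)\ge l(I^{+}(\Pi')\cap\tau)$, and $\Pi'\to\Pi^{*}$ in the stated regime.) Alternatively, you could salvage your limit argument by also extracting a limit of the lower support lines of $C^n$ at $y^n$: the limit is a spacelike or null line through $y_\infty$ with $C^\infty$ in its causal future, which excludes the point $\gamma\cap\tau$ and restores your contradiction --- but this extra ingredient is precisely the missing step, so it must be supplied explicitly.
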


Note that it is proved in ~\cite{MR2913100} that we always have $l(\tau \cap C) < \pi/2$.

The next and last proposition applied in our situation enables us to compare the length of $\lambda_{+}$ (in case its support is a simple closed curve $c$) for $m_{+}$ and $m_{-}$, considering a totally geodesic time-like annulus containing $c$ which is orthogonal, along $c$, to a support plane at some point of $c$.

Let $M$, $m_{+}$,$m_{-}$, $\lambda_{+}$,$\lambda_{-}$ be as in Theorem 1.

Suppose that the support of $\lambda_{+}$ is a closed curve $c$. Let $c_{-}$ be the curve obtained by intersecting the lower boundary component of the convex core of M with the totally geodesic timelike annulus orthogonal to a support plane at some point of $c$. Let $K$ be a compact subset of ${\mathcal T}(S)$.

\begin{pro}
For all $\epsilon > 0$, there exists some $A > 0$ such that if $m_{+} \in K$ and $l_{m_{+}}(\lambda_{+})\geq A$ then $l_{m_{-}}(\lambda_{+})\leq \epsilon l_{m_{+}}(\lambda_{+})$.
\end{pro}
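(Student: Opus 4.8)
The plan is to strip off the transverse weight first, reducing the statement to a pure length comparison between $c$ and $c_-$, and then to cut the convex core along the timelike annulus $\mathcal{A}$ and run the two earlier results inside the resulting $AdS_{2}$ slice. Since the support of $\lambda_{+}$ is the simple closed curve $c$, write $\lambda_{+}=\theta c$. Then $l_{m_{+}}(\lambda_{+})=\theta\,\ell_{m_{+}}(c)$ and $l_{m_{-}}(\lambda_{+})=\theta\,\ell_{m_{-}}(c)$, where $c$ is an $m_{+}$-geodesic. The curve $c_-$ is freely homotopic to $c$ (both generate the core of $\mathcal{A}$), so $\ell_{m_{-}}(c)\le \mathrm{length}(c_-)$, the intrinsic length of $c_-$ in $\partial_{-}C$, which equals its $AdS_{3}$ arclength. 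Hence the ratio $l_{m_{-}}(\lambda_{+})/l_{m_{+}}(\lambda_{+})=\ell_{m_{-}}(c)/\ell_{m_{+}}(c)$ is at most $\mathrm{length}(c_-)/\mathrm{length}(c)$, and it suffices to prove $\mathrm{length}(c_-)\le \epsilon\,\mathrm{length}(c)$ whenever $m_{+}\in K$ and $\mathrm{length}(c)=\ell_{m_{+}}(c)$ is large.

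The geometric mechanism I would exploit is the collapse of spacelike lengths toward the dual locus in $AdS_{2}$. Working in the universal cover of $\mathcal{A}$ with Fermi coordinates $(s,t)$ based on $c$ (with $s$ arclength along $c$ and $t$ the timelike distance), the spacelike lines equidistant to $c$ carry the length element $\cos(t)\,ds$, which degenerates as $t\to\pi/2$. Because $c_-$ is spacelike one has $\cos^{2}(t)\,ds^{2}-dt^{2}>0$ along it, so its length element is at most $\cos\bigl(t(s)\bigr)\,ds$, where $t(s)$ denotes the timelike distance from $c$ down to $c_-$. This yields the controlling estimate
$$\mathrm{length}(c_-)\;\le\;\int_{0}^{\ell_{m_{+}}(c)}\cos\bigl(t(s)\bigr)\,ds .$$
Thus everything reduces to showing that $t(s)$ is close to $\pi/2$ for most $s$.

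This is exactly where the two preceding results enter. Applying the Corollary above to $c$, $\lambda_{+}$ and $g=m_{+}\in K$, fix $\alpha_{0}$: for every $\delta_{0}$ there is $\beta_{0}$ so that, outside a subset of $c$ of $m_{+}$-measure at most $\delta_{0}\,\ell_{m_{+}}(c)+l_{0}$, both orthogonal arcs of length $\alpha_{0}$ satisfy $i(g^{l}_{x}(\alpha_{0}),\lambda_{+})$, $i(g^{r}_{x}(\alpha_{0}),\lambda_{+})>\beta_{0}\,l_{m_{+}}(\lambda_{+})$. Since the accumulated bending of the convex surface $\partial_{+}C$ along such an arc is precisely the transverse measure of $\lambda_{+}$ it crosses, these intersection numbers are the support-plane angles $\phi_{l,x},\phi_{r,x}$, so $\inf(\phi_{l,x},\phi_{r,x})>\beta_{0}\,l_{m_{+}}(\lambda_{+})$ on the good set, which exceeds the threshold $A$ of the $AdS_{2}$ thickness Proposition once $l_{m_{+}}(\lambda_{+})$ is large. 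That Proposition then gives $t(s)\ge \pi/2-\epsilon'$ on the good set, where $\cos t(s)\le\sin\epsilon'\le\epsilon'$, while on the bad set I bound $\cos t\le 1$. Feeding this into the integral gives $\mathrm{length}(c_-)\le \epsilon'\,\ell_{m_{+}}(c)+\delta_{0}\,\ell_{m_{+}}(c)+l_{0}$; choosing $\epsilon',\delta_{0}$ with $\epsilon'+\delta_{0}\le\epsilon/2$ and then $A$ large so that $l_{0}\le(\epsilon/2)\,\ell_{m_{+}}(c)$ yields $\mathrm{length}(c_-)\le\epsilon\,\mathrm{length}(c)$.

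The main obstacle I anticipate is the reconciliation of the two \emph{different} timelike directions in play: the thickness Proposition measures along geodesics orthogonal to the support planes $\Pi_{x}$, whereas the length estimate for $c_-$ is written in Fermi coordinates orthogonal to $c$, and $\mathcal{A}$ is orthogonal to a support plane only at a single point of $c$. Controlling the discrepancy between $t(s)$ and $l(\tau\cap C)$, and ruling out that $c_-$ is long over the bad set (using convexity of $c_-$ together with the crude $\cos t\le 1$ bound to keep that contribution within $\delta_{0}\,\ell_{m_{+}}(c)+l_{0}$), is the delicate part. A secondary point to verify carefully is that the identification of the $2$-dimensional angle $\phi_{r,x}$ with the $3$-dimensional dihedral bending survives the restriction to $\mathcal{A}$ uniformly, so that the hypothesis of the thickness Proposition is genuinely met.
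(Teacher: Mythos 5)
Your overall strategy is the same as the paper's: cut along the timelike annulus, use Proposition 2 (through its Corollary) to isolate a bad subset of $c$ of measure at most $\delta_0\ell_{m_+}(c)+l_0$, use Proposition 4 to show that over the good set the lower boundary lies at depth close to $\pi/2$, and convert depth into the collapse factor $\cos(\pi/2-\epsilon')$. Your one real departure is the length-comparison mechanism: the paper compares $c_-$ with the equidistant curve $c'_-$ at depth $\pi/2-\epsilon'$ and with $c$ itself via orthogonal projections and the lemma on lengths of nested convex spacelike arcs, whereas you bound $\mathrm{length}(c_-)\le\int\cos(t(s))\,ds$ directly in Fermi coordinates, using that a spacelike curve in the strip is a graph over $c$. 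That bound is correct and is arguably cleaner than the paper's projection argument, since it dispenses with the convex-arc lemma entirely.

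However, there is a genuine gap in how you invoke Proposition 4, and it is exactly the point you flag at the end without resolving. Proposition 4 cannot be ``run inside the $AdS_2$ slice'' cut out by $\mathcal{A}$: since the pleating locus is $c$ itself and $\mathcal{A}$ contains $c$, every support plane along $c$ meets $\mathcal{A}$ exactly in $\tilde c$, so the upper boundary of the two-dimensional convex set $\mathcal{A}\cap C$ is the geodesic $\tilde c$, all of whose support lines coincide with $\tilde c$; the angles $\phi_{l,x},\phi_{r,x}$ vanish identically in that slice, and the hypothesis of Proposition 4 can never be met there. The bending is visible only in slices \emph{transverse} to $c$: Proposition 4 must be applied in the totally geodesic timelike plane $Q_x$ orthogonal to $c$ at $x$ (this is what the paper means by ``cutting $c$ by an orthogonal plane at such points''), where the upper boundary of $Q_x\cap C$ crosses the bending lines and the support-line angles do accumulate the transverse measure of $\lambda_+$. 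Once this is done, your ``main obstacle'' dissolves rather than requiring delicate control: because both $\mathcal{A}$ and the support plane are totally geodesic and both contain $c$, orthogonality at one point of $c$ propagates along all of $c$; choosing at each $x\in c$ the support plane $\Pi_x$ orthogonal to $\mathcal{A}$, the timelike geodesic $\tau_x$ orthogonal to $\Pi_x$ lies in $Q_x\cap\mathcal{A}$ and is precisely your Fermi vertical, so $t(s)=l(\tau_x\cap C)$ with no discrepancy at all. With that substitution your argument closes, up to one further point you share with the paper: $l_{m_+}(\lambda_+)\ge A$ does not force $\ell_{m_+}(c)$ to be large when the weight $\theta$ is large, so the step $l_0\le(\epsilon/2)\ell_{m_+}(c)$ needs a separate (easy) treatment of the large-weight case, where the corner angle $\theta$ alone already satisfies the hypothesis of Proposition 4 at every point of $c$.
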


This last proposition extends to arbitrary $\lambda_{+}$ by density of weighted multicurves is ${\mathcal ML}(S)$, by continuity of length functions and continuity of $m_{-}$ with respect to $m_{+}$ and $\lambda_{+}$.\\
Let us explain why the 3 propositions imply the theorem. In fact, the compactness theorem is a consequence of Proposition $5$, which is a result of both Proposition $2$ and $4$.

Suppose there exist a sequence $m^{n}_{+}$ in ${\mathcal T}(S)$ converging to $m^{\infty}_{+} \in {\mathcal T}(S)$ and a sequence $\rho^{n}_{l}$ leaving any compact set in ${\mathcal T}(S)$. Then by Mess' diagram, the corresponding sequence of upper boundary's pleating lamination $\lambda^{n}_{+}$ leaves any compact set in ${\mathcal ML}(S)$. In particular, its length, measured with respect to $m^{n}_{+}$, tends to infinity.

Let $\epsilon>0$ and A be as in Proposition 5. There exists $n_{0}$ (depending on $\epsilon$ and $A$) such that for $n\geq n_{0}$, one has $l_{m^{n}_{+}}(\lambda^{n}_{+}) \geq A$, hence $l_{m^{n}_{-}}(\lambda^{n}_{+}) \leq \epsilon l_{m^{n}_{+}}(\lambda^{n}_{+})$. Since $m^{n}_{+}$ converges in ${\mathcal T}(S)$, this implies that $m^{n}_{-}$ leaves every compact subset of ${\mathcal T}(S)$ (i.e tends to infinity), proving the properness of $\Phi$.

\section{Proof of the main propositions}

We begin with the proof of Proposition $2$, we need the following lemma.

Let $S$ be a closed surface of negative Euler characteristic, $g \in {\mathcal T}(S)$, $\alpha_{0}>0$. Let $c$ a closed geodesic for $g$. For any point $x \in c$, let $g_{x}^{r}(\alpha_{0})$ (resp. $g_{x}^{l}(\alpha_{0})$) be the geodesic segment of length $\alpha_{0}$ (for the metric $g$), orthogonal at $x$ to the right (resp. the left) of $c$. Let $n_{l}(x)$ (resp. $n_{r}(x)$) be the intersection number of $g_{x}^{r}(\alpha_{0})$ (resp. $g_{x}^{l}(\alpha_{0})$) with $c$, including $x$.

\begin{lem}
There exists a constant $l_{0}$ (depending only on $\alpha_{0}$ and the genus of $S$) such that for all $\delta_{0}>0$, there exists some $\beta_{0}>0$ (depending on $\alpha_{0}$, $\delta_{0}$ and the genus of $S$) such that:
$$l_{g}(\{x \in c : n_{l}(x) \leq \beta_{0}l_{g}(c)\}) \leq \delta_{0}l_{g}(c) + l_{0}.$$

\end{lem}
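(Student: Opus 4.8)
The plan is to phrase everything through the normal exponential map of $c$ and reduce the statement to an area estimate in $(S,g)$, the basic tension being that the relevant strip has area growing linearly in $L:=l_g(c)$ while $S$ has the fixed area $2\pi|\chi(S)|$. Concretely, I parametrize $c$ by arclength $c:[0,L]\to S$, let $\nu(u)$ be the unit normal pointing to the right, and set $F(u,t)=\exp_{c(u)}(t\,\nu(u))$ for $t\in[0,\alpha_0]$. In Fermi coordinates along the geodesic $c$ the metric is $\cosh^2(t)\,du^2+dt^2$, so $F$ has Jacobian $\cosh t\ge 1$ and the one-sided strip $F([0,L]\times[0,\alpha_0])$ has area, counted with multiplicity, equal to $\int_0^L\int_0^{\alpha_0}\cosh t\,dt\,du=L\sinh\alpha_0$. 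Lifting to $\mathbb{H}^2$, $n_l(c(u))$ is exactly the number of lifts of $c$ met by the right orthogonal segment $\sigma_u$ of length $\alpha_0$ issued from $\tilde c(u)$ (the basepoint always contributing $1$), so I must show that for most $u$ the segment $\sigma_u$ is crossed by many strands of $c$.

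I would first isolate the source of the additive constant $l_0$. If $n_l(c(u))=1$ then $\sigma_u$ meets $c$ only at its base, i.e. $u$ carries an embedded one-sided collar of width $\alpha_0$; the restriction of $F$ to the region swept before the first return is injective, since a self-overlap would force a lift of $c$ to cross $\sigma_u$ strictly inside, contradicting the absence of an interior crossing. Hence that region is embedded, its area equals $\big|\{u:n_l(c(u))=1\}\big|\sinh\alpha_0\le 2\pi|\chi(S)|$, and therefore $\big|\{n_l=1\}\big|\le l_0:=2\pi|\chi(S)|/\sinh\alpha_0$, a quantity depending only on $\alpha_0$ and the genus. The same embeddedness of the first-return region gives the stronger inequality $\int_0^L\sinh\rho(u)\,du\le 2\pi|\chi(S)|$, where $\rho(u)\in(0,\alpha_0]$ is the first-return distance, which already controls the measure of points whose arc has a large initial gap before meeting $c$.

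To upgrade this to the full inequality with threshold $\beta_0 L$, the plan is to combine the area identity $\int_S m(y)\,dA(y)=L\sinh\alpha_0$ for the multiplicity $m$ of the strip with the elementary hyperbolic fact that a geodesic segment of length $\alpha_0$ crossing a region covered with multiplicity of order $m$ must meet of order $m$ strands of $c$. Since the average multiplicity $L\sinh\alpha_0/2\pi|\chi(S)|$ grows linearly in $L$, a definite proportion of the segments $\sigma_u$ must pass through regions of comparably high multiplicity and hence satisfy $n_l(c(u))\ge \beta_0 L$ for a suitable $\beta_0=\beta_0(\alpha_0,\delta_0,\mathrm{genus})$. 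The exceptional set is then bounded by transferring, through the co-area formula and the bound $\cosh t\ge 1$, the area of the low-multiplicity part of the strip (at most the fixed $2\pi|\chi(S)|$, plus a proportion $\delta_0$ coming from the thresholding) back to the base curve, producing the $\delta_0 L+l_0$ bound.

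I expect the genuine difficulty to lie precisely in this last transfer: an averaged lower bound on $n_l$ does not by itself control a sublevel set, so the conversion from ``high average multiplicity'' to ``$n_l\ge\beta_0 L$ on most of $c$'' must be made uniform over all closed geodesics, including those spending most of their length winding in the thin part of $S$. Here I would invoke the thick--thin decomposition and the collar lemma, with constants depending only on the genus: inside a Margulis tube the windings of $c$ force $\sigma_u$ to cross many strands, so those points are automatically good, while the geometrically controlled thick part is where the multiplicity/area count applies. Keeping $\beta_0$ dependent only on $\alpha_0$, $\delta_0$ and the genus, rather than on $c$ itself, while absorbing every uncontrolled contribution into the fixed $l_0$ and the proportional $\delta_0 L$, is the crux of the argument.
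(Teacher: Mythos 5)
Your overall framework (normal exponential strip, area of $S$ as the constraint, multiplicity counting) is the right family of ideas, and is indeed the one the paper uses, but two steps in your write-up are genuinely broken. First, the geometric fact you invoke twice is false at the width you use it. Both your injectivity claim on the first-return region and your ``elementary hyperbolic fact'' amount to: if two segments of length $\alpha_{0}$, issued orthogonally from two disjoint strands of $c$, meet at a point, then one of them must cross the other strand. This fails in $H^{2}$: take two disjoint geodesics whose gap in the relevant region is about $0.9$ and segments of length $\alpha_{0}=0.5$ issued orthogonally from each; they can cross transversally in the middle of the gap while neither comes anywhere near the opposite geodesic. Proximity to two strands forces a crossing only when the proximity is small compared to $\alpha_{0}$ --- this is precisely the paper's Sublemma, which produces a width $\gamma_{0}=\gamma_{0}(\alpha_{0})\ll\alpha_{0}$ such that a point within distance $\gamma_{0}$ of two disjoint lines guarantees that the orthogonal segment of length $\alpha_{0}$ from one of them through that point crosses the other. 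Consequently the strip on which multiplicity can be compared with intersection numbers must have width $\gamma_{0}$, not $\alpha_{0}$; your area identity $L\sinh\alpha_{0}$ is correct but carries no usable information about $n_{l}$.

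The second, more serious gap is the one you flag yourself: an averaged (linear in $L$) multiplicity bound cannot control the sublevel set $\{n_{l}\leq\beta_{0}L\}$, and your thick--thin/collar discussion is a plan, not an argument. The paper shows this detour is unnecessary by turning the logic around: restrict the normal exponential map to $\overline{c}\times[0,\gamma_{0}]$, where $\overline{c}=\{x\in c : n_{l}(x)\leq\beta_{0}l_{g}(c)\}$ is the bad set itself. If a point of $S$ had more than $n_{0}+1$ preimages under this restricted map, with $n_{0}=\lfloor\beta_{0}l_{g}(c)\rfloor$, then lifting to $H^{2}$ and applying the Sublemma pairwise, the segment of length $\alpha_{0}$ based at one of these bad points would have to cross more than $n_{0}$ lifts of $c$, contradicting $n_{l}\leq n_{0}$ on $\overline{c}$. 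So over the bad set the multiplicity is at most $n_{0}+1$; since the map does not decrease areas, $\gamma_{0}\,l_{g}(\overline{c})\leq 2\pi(n_{0}+1)|\chi(S)|$, and choosing $\beta_{0}<\delta_{0}\gamma_{0}/(2\pi|\chi(S)|)$, $l_{0}=2\pi|\chi(S)|/\gamma_{0}$ gives exactly the stated inequality. The key idea you are missing is that the hypothesis $n_{l}\leq\beta_{0}l_{g}(c)$ should be used as an \emph{upper bound on the multiplicity over the bad set}, rather than trying to extract a lower bound on $n_{l}$ for most points from a high average multiplicity, which, as you correctly observe, does not follow.
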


The similar statement is true for $n_{r}(x)$ by the same argument. The proposition then follows by combining both statements.

\begin{sub}
There exists $\gamma_{0}$ (depending on $\alpha_{0}$) as follows. Let $D_{1}$, $D_{2}$ be two disjoint lines in the hyperbolic plane and let $x$ be a point in the connected component of the complement in $H^2$ of $D_{0} \cup D_{1}$ whose boundary contains those two lines. Suppose that $d(x,D_{0}) \leq \gamma_{0}$ and $d(x,D_{1}) \leq \gamma_{0}$. Then the geodesic segment of length $\alpha_{0}$ starting orthogonally from $D_{0}$ and containing $x$ intersects $D_{1}$.
\end{sub}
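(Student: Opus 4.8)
The plan is to normalize by an isometry of $\mathbb{H}^{2}$ and reduce the statement to an elementary (but quantitative) fact about geodesics in the upper half-plane. Let $p_{0}$ be the foot of the perpendicular from $x$ to $D_{0}$, so the perpendicular geodesic $\tau$ through $x$ meets $D_{0}$ orthogonally at $p_{0}$ and $a:=d(x,D_{0})\le\gamma_{0}$. I choose an isometry sending $D_{0}$ to the unit semicircle $\{|z|=1\}$ and $p_{0}$ to $i$; then $\tau$ becomes the imaginary axis, and after possibly composing with the inversion $z\mapsto 1/\bar z$ (which fixes $D_{0}$ and $\tau$) I may assume $x=ie^{a}$ lies outside the unit disk. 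The segment under study is then $\sigma=\{\,iv:1\le v\le e^{\alpha_{0}}\,\}$ — it starts at $p_{0}=i$, runs along $\tau$, and contains $x$ as soon as $\gamma_{0}\le\alpha_{0}$ — and the claim reduces to $D_{1}\cap\sigma\neq\emptyset$.

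Next I would record the constraints on $D_{1}$. Since $x$ lies outside the unit disk in the component bounded by both lines, $D_{1}$ is \emph{exterior} to $D_{0}$, so its two ideal endpoints lie in the closed outer arc $\bar J=[1,\infty]\cup[-\infty,-1]$ of $\partial\mathbb{H}^{2}$. Moreover $d(p_{0},D_{1})\le d(p_{0},x)+d(x,D_{1})=a+b\le 2\gamma_{0}$, so $D_{1}$ passes within $2\gamma_{0}$ of $i$. The point is that, for $\gamma_{0}$ small, an exterior geodesic passing this close to $i$ is forced to cross $\tau$ just above $i$: writing its endpoints as $A>1$ and $-B<-1$, it meets $\tau$ at height $\sqrt{AB}>1$, and this height tends to $1$ as the endpoints tend to $\{1,-1\}$. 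The remaining exterior geodesics — those with both endpoints in $(1,\infty]$ or both in $[-\infty,-1)$ — stay inside $\{|\operatorname{Re}z|\ge 1\}$ and hence at distance at least $\operatorname{arcsinh}(1)$ from $i$, so they become irrelevant once $\gamma_{0}$ is small.

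I would make this uniform by a compactness argument, which is the cleanest way to produce $\gamma_{0}$ without tracking explicit constants. The space of geodesics exterior to and disjoint from $D_{0}$ is parametrized by unordered pairs of distinct points of the compact arc $\bar J$, hence is compact, and the intersection height with $\tau$ (where defined) depends continuously on the endpoints. Call a geodesic \emph{bad} if it misses the compact segment $\sigma$. If some sequence of bad geodesics came within $o(1)$ of $i$, a subsequence would converge to a geodesic through $i$ with endpoints in $\bar J$; such a limit meets $\tau$ transversally at $i$, i.e.\ at height $1<e^{\alpha_{0}}$, so by continuity the members of the sequence would cross $\tau$ at heights tending to $1$ and therefore eventually lie in $\sigma$ — contradicting badness. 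Hence the bad geodesics stay at distance $\ge\rho_{0}>0$ from $i$, and taking $\gamma_{0}=\min(\rho_{0}/3,\alpha_{0})$ (which depends only on $\alpha_{0}$) finishes the proof: then $d(p_{0},D_{1})\le 2\gamma_{0}<\rho_{0}$ forces $D_{1}$ to meet $\sigma$.

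\textbf{Main obstacle.} The only delicate point is the geometric heart of the second paragraph, made uniform in the third: showing that the disjointness of $D_{0}$ and $D_{1}$ (equivalently, that $D_{1}$ is exterior) prevents the near-$i$ geodesics from either avoiding $\tau$ altogether or crossing it far above $i$. This is exactly where the hypothesis that the two lines are disjoint, rather than merely close to $x$, is used, and it is what rules out the potential degeneration in which $D_{1}$ becomes nearly tangent to $\tau$ while still passing close to $x$.
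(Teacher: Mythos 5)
Your argument is correct in substance, but it takes a genuinely different route from the paper's. The paper's proof is a direct hyperbolic-trigonometry computation: it introduces an auxiliary line $D$ through $x$ disjoint from both $D_{0}$ and $D_{1}$, applies the identity $\cosh(l(c_{i}))\sin(\theta_{i})=\cosh(d(D,D_{i}))$ to the two perpendicular segments $c_{0}$, $c_{1}$ dropped from $x$, deduces that for small $\gamma_{0}$ both segments meet $D$ at angles within $\delta_{0}$ of $\pi/2$, so that $c_{0}$ and the prolongation $c'_{1}$ of $c_{1}$ beyond $x$ make an angle at most $2\delta_{0}$, and concludes by a small-angle estimate that the prolonged perpendicular meets the other line within distance $\alpha_{0}$. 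Your proof replaces all of this by an explicit normalization in the upper half-plane (with $D_{0}$ the unit semicircle and the foot of the perpendicular at $i$), the classification of geodesics disjoint from $D_{0}$ by their ideal endpoints, the crossing-height formula $\sqrt{AB}\geq 1$, and a limiting argument to produce the uniform constant. The paper's route yields explicit constants and is shorter, at the price of citing a textbook formula and of the unjustified (though true) existence of the auxiliary line $D$; yours is more elementary and self-contained, at the price of a non-explicit $\gamma_{0}$.

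One step of yours needs repair, though the repair is easy. The space of unordered pairs of \emph{distinct} points of $\bar J$ is not compact --- pairs may degenerate onto the diagonal, which here can happen only at the point $\infty$ --- so the clause ``hence is compact'' is false as written, and your extracted subsequence of bad geodesics approaching $i$ could a priori have both ideal endpoints tending to $\infty$. This degeneration must be excluded before you may pass to a limiting geodesic through $i$: if $A_{n}\to\infty$ and $B_{n}\to\infty$, every point of the semicircle with feet $-B_{n}$ and $A_{n}$ satisfies $|z|\geq\min(A_{n},B_{n})\to\infty$, so these geodesics leave every compact subset of $H^{2}$ and cannot come close to $i$; and if exactly one endpoint tends to $\infty$, the geodesics converge to a vertical line $\{\operatorname{Re} z = a\}$ with $|a|\geq 1$, which stays at distance at least $\operatorname{arcsinh}(1)$ from $i$. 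With this observation inserted, the subsequential limit is a genuine geodesic through $i$ with endpoints in $\bar J$, hence (since your height formula forces $A_{\infty}B_{\infty}=1$ with $A_{\infty},B_{\infty}\geq 1$) it is $D_{0}$ itself, and the rest of your argument goes through as written.
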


\begin{figure}[ht]

\begin{center}
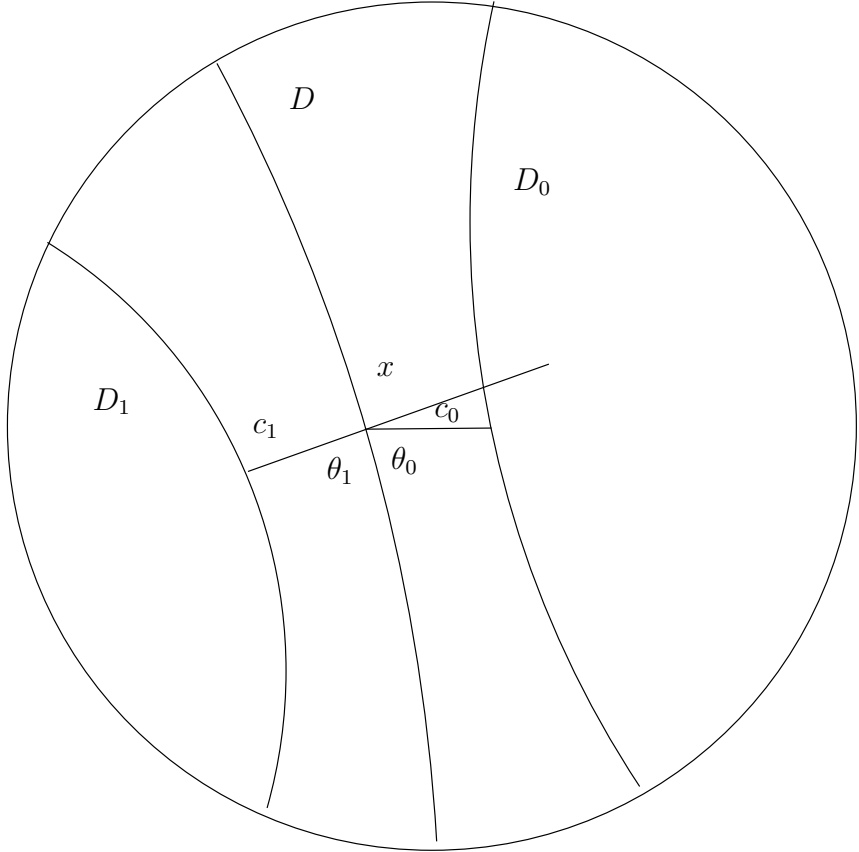
\caption{intersections de g\'eod\'esiques hyperboliques planes}
\end{center}

\end{figure}
\begin{proof}[\textbf{Proof of the sublemma}]
Let $c_{0}$ and $c_{1}$ be the geodesic segments joining $x$ to its orthogonal projections on $D_{0}$ and $D_{1}$, respectively. Then $d(x,D_{0})=l(c_{0})$ and $d(x,D_{1})=l(c_{1})$. Let $D$ be a hyperbolic line containing $x$ and disjoint from $D_{0}$ and $D_{1}$, $\theta_{0}$ and $\theta_{1}$ be the respective angles between $D$ and $c_{0}$, $D$ and $c_{1}$. Then $$\cosh(l(c_{0}))\sin(\theta_{0})=\cosh(d(D,D_{0}))$$ and $$\cosh(l(c_{1}))\sin(\theta_{1})=\cosh(d(D,D_{1}))$$
(See ~\cite[page 88]{MR2249478}). Since $d(D,D_{0}) \leq l(c_{0}) \leq \gamma_{0}$ and $d(D,D_{1}) \leq l(c_{1}) \leq \gamma_{0}$, $$ |\pi/2 - \theta_{0}| \leq \delta_{0} , |\pi/2 - \theta_{1}| \leq \delta_{0} $$ for sufficiently small $\gamma_{0}$ (depending on $\delta_{0}$).

Let now $c'_{1}$ be the half-line extending $c_{1}$ on the other side of $x$. The angle between $c_{0}$ and $c'_{1}$ is less than $2\delta_{0}$. If $\gamma_{0}$ is small enough, then $\delta_{0}$ is small and $c'_{1}$ intersects $D_{0}$ (which is orthogonal to $c_{0}$) at distance less than $\alpha_{0}$. This is the required statement.
\end{proof}

\begin{proof}[\textbf{Proof of the lemma}]
Let $\delta_{0}>0$, $\gamma_{0}$ as in the sublemma, and $0<\beta_{0}<\delta_{0}\gamma_{0}/(2\pi|\chi(S)|)$. Let $\overline{c}=\{x \in c : n_{l}(x) \leq \beta_{0}l_{g}(c)\}$.
Fix $\gamma_{0}$ as in the sublemma and consider the normal exponential map: $$ exp: \left.\begin{array}{ll}\overline{c} \times [0,\gamma_{0}] \rightarrow S \\ (s,r) \mapsto g_{s}^{l}(r)\end{array}\right. $$
It is a distance increasing map so it increases areas. Moreover, the sublemma shows that each $x \in S$ has at most $n_{0}+1$ preimages in $\overline{c} \times [0,\gamma_{0}]$, where $n_{0}$ is the integer part of $\beta_{0}l_{g}(c)$. Indeed, suppose that $x$ is the image of both $(y,r)$ and $(y',r')$ and let $\overline{x}$,$\overline{y}$ and $\overline{y}'$ be lifts of $x$,$y$ and $y'$ to the universal cover $H^2$ of $(S,g)$ chosen so that the corresponding lift of $\exp(\{y\}\times [0,r])$ (resp. $\exp(\{y'\}\times [0,r'])$) contains both $\overline{x}$ and $\overline{y}$ (resp. $\overline{x}$ and $\overline{y}$'). Then one of the following holds:

$$\left.\begin{array}{ll} $$exp(\{y\}\times [0,r])$ intersects the lift of $c$ containing $y'$$ \\ $$exp(\{y'\}\times [0,r'])$ intersects the lift of $c$ containing $y$$ \\ $$exp(\{y\}\times [0,\alpha_{0}])$ intersects the lift of $c$ containing $y'$ and conversely.$ \end{array}\right.$$

(The $3rd$ possibility is obtained by applying the SubLemma) This argument shows that there are at most $n_{0} + 1$ preimages of $x$ in $\overline{c}\times [0,\alpha_{0}]$.

Since the area of $(S,g)$ is $2\pi|\chi(S)|$, it follows that $$\gamma_{0}l_{g}(\overline{c}) \leq 2\pi(n_{0}+1)|\chi(S)| \leq 2\pi(\beta_{0}l_{g}(c)+1)|\chi(S)| \leq \gamma_{0}(\delta_{0}l_{g}(c) + l_{0})$$

if one puts $l_{0}=2\pi|\chi(S)|/\gamma_{0}$.
\end{proof}

We now prove Proposition $4$, we first need a lemma comparing the lengths of two past (resp. future) convex spacelike arcs, with the same endpoints, one being in the past (resp. future) of the other.

\begin{lem}[lengths of convex arcs with fixed endpoints]
Let $\sigma_{0}$ and $\sigma_{1}$ be two future (resp. past) convex spacelike curves with the same endpoints in $AdS_{2}$. Suppose that $\sigma_{0}$ lies in the future (resp. past) of $\sigma_{1}$. Then $$l(\sigma_{0}) \geq l(\sigma_{1}).$$
\end{lem}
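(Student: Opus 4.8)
The plan is to set up a comparison through a one-parameter family of convex arcs interpolating between $\sigma_1$ and $\sigma_0$, and to show that the length functional is monotone along this family. First I would exploit the convexity hypothesis: since $\sigma_0$ is a future convex spacelike curve lying in the future of $\sigma_1$, and both share their two endpoints, the region $R$ bounded by $\sigma_0$ and $\sigma_1$ is a convex domain in $AdS_2$. I would then define, for each point $p\in\sigma_1$, the future-directed orthogonal timelike geodesic $\tau_p$ emanating from $p$ until it first meets $\sigma_0$; because $\sigma_0$ is convex and lies to the future, I expect these timelike geodesics to give a well-defined projection $\pi\colon\sigma_1\to\sigma_0$. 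The heart of the argument is the infinitesimal statement: moving a spacelike arc into its future by flowing along future-directed timelike geodesics orthogonal to it \emph{increases} spacelike length, which is the Lorentzian reversal of the familiar Riemannian fact that orthogonal geodesic flow off a convex curve increases length. This is where the negative curvature $-1$ and the causal structure of $AdS_2$ enter decisively.

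Concretely, I would consider the normal exponential flow $F_t$ sending $\sigma_1$ along its future-directed unit orthogonal timelike field a parameter $t$, producing a family of spacelike curves $\sigma_1^t$ with $\sigma_1^0=\sigma_1$, and compute the first variation of length. Writing the length as $l(\sigma_1^t)=\int\sqrt{-\langle \partial_s,\partial_s\rangle}\,ds$ over the (spacelike, hence negative-norm under the convention giving spacelike vectors positive length — I would fix the sign convention carefully here) arc, the derivative in $t$ involves the second fundamental form of $\sigma_1$ with respect to the future timelike normal. The key computation is that under the future orthogonal timelike flow in a space of curvature $-1$, the relevant Jacobi-type equation forces the induced spacelike length to grow, so $\frac{d}{dt}l(\sigma_1^t)\ge 0$. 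Integrating from $\sigma_1$ up to $\sigma_0$ along this flow then yields $l(\sigma_0)\ge l(\sigma_1)$. A convenient way to organize this is to approximate $\sigma_1$ and $\sigma_0$ by spacelike geodesic polygons and reduce to a single elementary comparison: given a spacelike geodesic segment, pushing each endpoint into the future along orthogonal timelike geodesics and taking the spacelike geodesic between the new endpoints does not decrease length; this is a purely trigonometric fact in $AdS_2$, provable from the hyperbolic-trigonometry identities already invoked in the sublemma above.

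The main obstacle I anticipate is \emph{not} the infinitesimal inequality itself but ensuring the interpolating flow is globally well-defined and stays spacelike: the future orthogonal timelike geodesics off $\sigma_1$ must foliate the region $R$ bounded by the two arcs without the image curves $\sigma_1^t$ developing singularities or becoming non-spacelike before reaching $\sigma_0$. Here the convexity of both boundary arcs and the fact that $\sigma_0$ lies entirely in the future of $\sigma_1$ should guarantee that the projection $\pi$ is a homeomorphism and that each $\sigma_1^t$ remains an embedded spacelike arc, but this requires a careful causal argument — essentially controlling how long orthogonal timelike geodesics remain embedded in $AdS_2$, where such geodesics are closed of length $\pi$ and focus. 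Restricting attention to the convex core region, where the relevant timelike distances are bounded well below $\pi/2$ (as noted in the remark following Proposition 4), should keep us short of any focal points and make the foliation genuine. The past case follows by the obvious time-reversing symmetry of $AdS_2$, so I would state and prove only the future case and invoke symmetry for the parenthetical assertion.
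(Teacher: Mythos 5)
Your proof has a genuine gap, and it sits exactly at the step you call the ``heart of the argument.'' The claim that flowing a spacelike arc into its future along orthogonal timelike geodesics increases its length is false in $AdS_{2}$ as a general statement: for a spacelike \emph{geodesic}, the orthogonal timelike flow produces equidistant curves whose length is scaled by the factor $\cos t$, hence \emph{strictly decreasing} in both time directions. (This very fact is used in the paper, in the proof of Proposition 5, where $l(c'_{-})=\cos(\pi/2-\epsilon')\,l_{m_{+}}(c)$.) The first variation of length under the unit normal flow is $\frac{d}{dt}l(\sigma_{1}^{t})\big|_{t=0}=\int a\,ds$, where $a$ is the second fundamental form of the curve, so positivity requires future convexity of the evolving curve, not curvature $-1$ or causality. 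Worse, future convexity is not preserved by the flow: solving the Jacobi equation $f''+f=0$ with $f(0)=1$, $f'(0)=a_{0}$ gives $f(t)=\cos t+a_{0}\sin t$ and an evolved shape operator $a(t)=\tan(\arctan a_{0}-t)$, so each point of the curve straightens at time $\arctan a_{0}(s)$ and thereafter its length element shrinks. On the geodesic pieces of the polygonal approximations you propose to use, $a_{0}=0$ identically, so those pieces shrink immediately; the only length gain comes from the fans opening at the corners, and whether the gain beats the loss is a global balance that your infinitesimal argument does not control. There is also the structural problem that the normal flow moves the endpoints of $\sigma_{1}$ into the future, so your family does not interpolate between two arcs with the same fixed endpoints; stopping the flow on $\sigma_{0}$ forces a bookkeeping of absorbed sub-arcs that you do not carry out.

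Your fallback ``elementary comparison'' is also false, and in the direction opposite to what you expect: pushing both endpoints of a spacelike geodesic of length $L$ a time $t$ into the future along the orthogonal timelike geodesics and joining the new endpoints by a geodesic yields, by a direct computation in the quadric model, $\cosh d_{t}=\cos^{2}t\,\cosh L+\sin^{2}t<\cosh L$, i.e.\ a \emph{strictly shorter} geodesic; the curvature $-1$ works against you here, not for you. The mechanism that actually proves the lemma is different: the paper reduces by approximation to piecewise geodesic arcs and invokes the reverse triangle inequality of $AdS_{2}$ (Sublemma 4.6 of \cite{MR2913100}) --- a spacelike geodesic is \emph{longer} than any future convex broken spacelike path with the same endpoints lying in its past --- then proceeds by induction on the number of segments, using an auxiliary pull-back construction to create a common point when $\sigma_{0}$ is itself broken. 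In other words, the length gain of the outer curve comes from the corners (convexity) via the reverse triangle inequality, not from any monotonicity of the orthogonal normal flow, which is exactly the ingredient your proposal relies on and which fails.
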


\begin{proof}[\textbf{Proof of the lemma}]
We first examine the case where $\sigma_{0}$ and $\sigma_{1}$ are piecewise geodesic arcs.

First, suppose $\sigma_{0}$ is a geodesic. Then by induction on the number of geodesic arcs in $\sigma_{1}$, this case is a consequence of Sublemma $4.6$ of ~\cite{MR2913100} (the "reverse triangle inequality").

Now suppose $\sigma_{0}$ is a piecewise geodesic arc. Consider the geodesic arc $\sigma'_{0}$ joining the endpoints of $\sigma_{0}$. By orthogonal projection of each vertex of $\sigma_{1}$ on $\sigma'_{0}$, we get timelike arcs joining those vertices to points of $\sigma'_{0}$ and orthogonal to $\sigma'_{0}$. Now we can pull back $\sigma_{1}$ along those arcs (with the normal exponential map) until it reaches (at least) one point of $\sigma_{0}$. We get a piecewise geodesic arc $\sigma'_{1}$, which is longer than $\sigma_{1}$, spacelike and future convex but this time $\sigma'_{1}$ and $\sigma_{0}$ have a common point. We can then apply the induction argument on both sides of this common point on $\sigma_{0}$ and $\sigma'_{1}$.

Then the general case follows by approximation.
\end{proof}

We keep the same notations as in the statement of Proposition $4$ .

\begin{proof}[\textbf{Proof of Proposition $4$}]
Letting $x_{l}$ (resp. $x_{r}$) be the intersection point of $\Pi_{x}$ and $P_{l,x}$ (resp. $P_{r,x}$), we note that the (spacelike) distance between $x$ and $x_{l}$ (resp. $x$ and $x_{r}$) is less than $\alpha_{0}$, by Lemma $1$.

\begin{figure}[ht]

\begin{center}
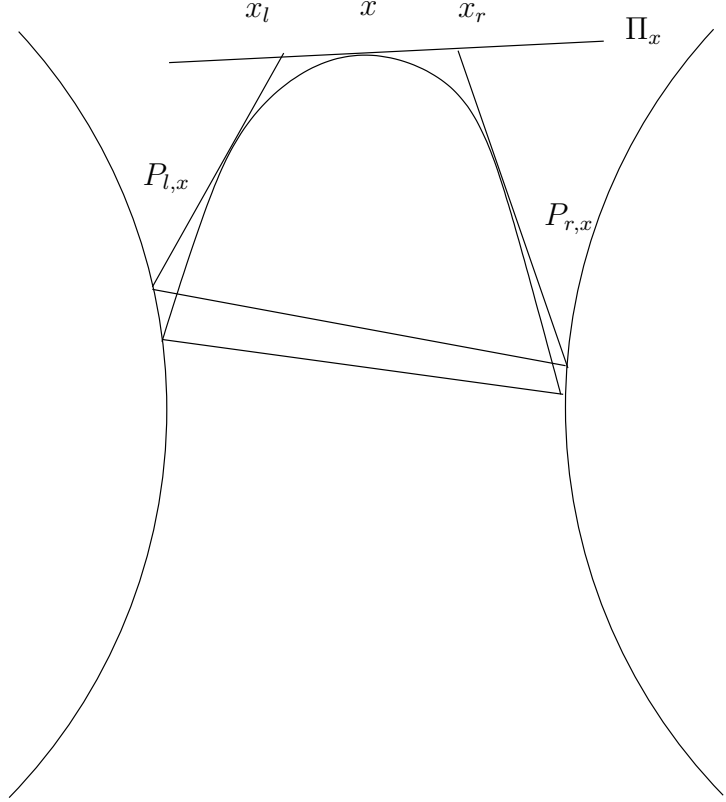
\caption{cutting the convex core by a plane orthogonal to a support line}
\end{center}

\end{figure}

The configuration made by the three spacelike lines and the spacelike line $\Pi^{'}$ joining the two points at infinity of $P_{l,x}$ and $P_{r,x}$ in the past of $\Pi_{x}$ admits a well-known limit situation: when $\alpha_{0}$ goes to zero, $x_{l}$ and $x_{r}$ tend to $x$, moreover when $\phi_{l,x}$ and $\phi_{r,x}$ tend to infinity, $P_{l,x}$ and $P_{r,x}$ tend to lightlike lines so as to let $\Pi^{'}$ tend to the dual line $\Pi^{*}$ to $x$, at (timelike) distance $\pi/2$ from $x$.

The result follows since by convexity, $$l(\tau \cap C)\geq l(I^{+}(\Pi^{'})\cap \tau)$$
\end{proof}

\begin{figure}[ht]

\begin{center}
\includegraphics{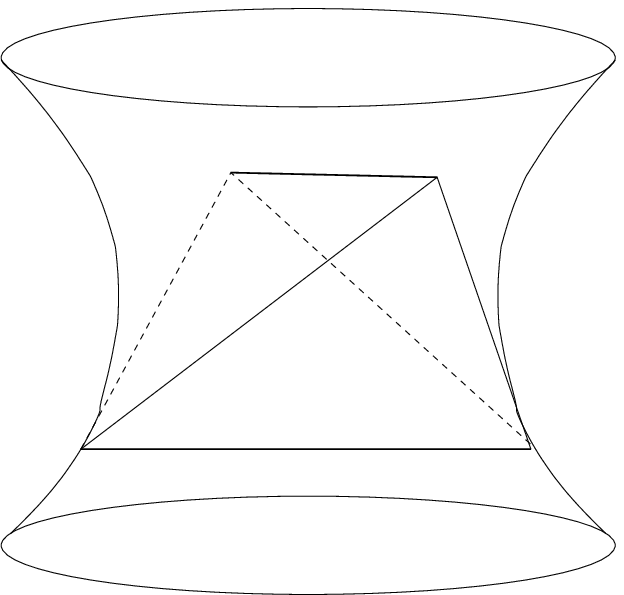}
\caption{dual lines}
\end{center}

\end{figure}

Note that in our case, since $\alpha_{0}$ tends to $0$, having our intersection numbers $n_{l}(x)$ and $n_{r}(x)$ tend to $\infty$ is equivalent to having the angles $\phi_{l,x}$ and $\phi_{r,x}$ tend to $\infty$ (since the left and right geodesic segments of length $\alpha_{0}$ at $x$ meet $c$ with an angle near from $\pi/2$ )

It remains to prove Proposition $5$ . We keep the same notations as in the statement of this proposition.

\begin{proof}[\textbf{Proof of Proposition $5$}]
Let $\epsilon > 0$, let $K$ be a compact subset of ${\mathcal T}(S)$. We choose $\epsilon' > 0$ (say $\epsilon' < \pi/2$), such that $$\cos(\pi/2-\epsilon') \leq \epsilon/2.$$ Then there are constants $A'$ and $\eta_{0}$ such that if $$\inf(\phi_{r,x},\phi_{l,x}) \geq A'$$ and $$\alpha_{0} \leq \eta_{0}$$ then $$l(\tau \cap C)\geq \pi/2-\epsilon'$$ with the notations of Proposition 4 .

Let $A(c)$ be a totally geodesic timelike annulus orthogonal to a support plane of $C$ along $c$. Let $c_{-}=A(c)\cap \partial_{-}C$ and let $c'_{-}$ be the curve at distance $\pi/2-\epsilon'$ in the past of $c$ (lying in $A(c)$). Then $c'_{-}$ is a future convex curve and $$l(c'_{-}) = \cos(\pi/2-\epsilon')l_{m_{+}}(c),$$ hence $$l(c'_{-})\leq (\epsilon/2)l_{m_{+}}(c).$$

Consider now the orthogonal projection $Pr_{-}$ from $c_{-}$ to $c'_{-}$. There are two types of points on $c_{-}$, those at distance greater than or equal to $\pi/2 - \epsilon'$ from $c$ and the others.

\begin{figure}[ht]

\begin{center}
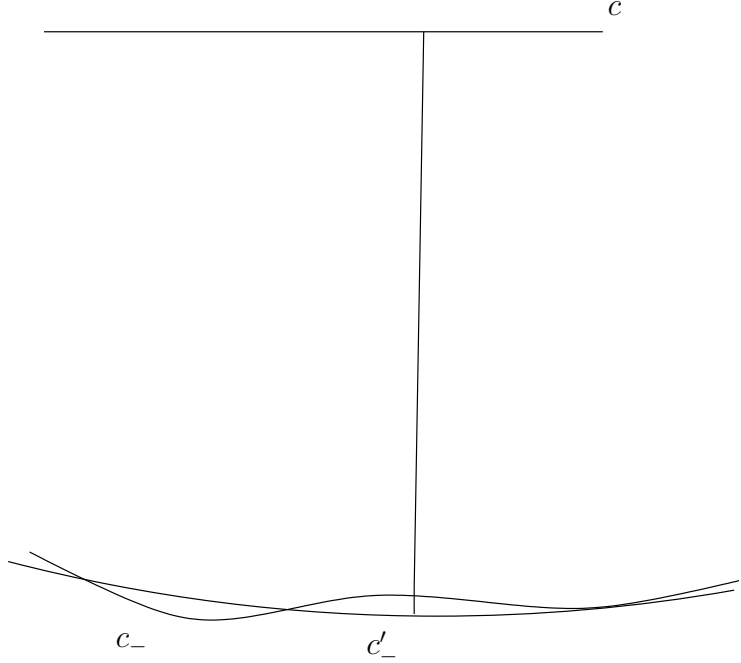
\caption{comparing length on top and bottom of the convex core}
\end{center}

\end{figure}

Let $\beta_{0}$ as in Proposition $2$ and $\delta_{0}=\epsilon/4$.
Proposition 4 gives the existence of constants $A'$ and $\eta_{0}$ (where $\alpha_{0} \leq \inf (\eta_{0}, \epsilon/2)$), we choose $c$ such that $l_{m_{+}}(c) \geq A'/\beta_{0}$.  
Let $$B_{-}=\{\tau_{x}\cap \partial_{-}C,x \in c, \inf(n_{l}(x),n_{r}(x)) \geq \beta_{0}l(c) \},$$ then points of $B_{-}$ belongs to $$\{ y \in c_{-}, d(y,c)\geq \pi/2-\epsilon'\}.$$ The points of the first type are in the past of $c'_{-}$, so by Lemma 1, $$l(B_{-}) \leq  l(Pr_{-}(B_{-})),$$ hence by composition of projections, $$l(B_{-}) \leq (\epsilon/2)l(Pr(B_{-})), $$ where $Pr$ is the projection from $c_{-}$ to $c$.\\
Let $$B= \{\tau_{x}\cap \partial_{-}C,x \in c, \inf(n_{l}(x),n_{r}(x)) \leq \beta_{0}l(c) \}.$$

Lemma 1 gives $$\left.\begin{array}{ll}l(\{x \in c, \inf(n_{l}(x),n_{r}(x)) \leq \beta_{0}l(c)\}) \geq l(B)\end{array}\right.$$ 

Then, by choosing $A \geq \sup( A'/\beta_{0},4l_{0}/\epsilon)$ (where $l_{0}$ comes from Proposition 4), $$\left.\begin{array}{ll}l(\{x \in c, \inf(n_{l}(x),n_{r}(x)) \leq \beta_{0}l(c)\}) \leq \delta_{0}l(c) + l_{0}\leq \epsilon l(c)/4 + \epsilon l(c)/4\end{array} \right.$$ hence $$\left.\begin{array}{ll}\epsilon l(c)/2 \geq l(B)\end{array}\right. .$$\\ We get the desired result by addition, then by passing to the limit for a general lamination.
\end{proof}

\section{Computing degrees}

In this section, we investigate the degree of the proper map $\Phi$. Our main aim is the

\begin{pro}[degree one theorem]
Let $\Phi$ be the map defined in section 2. Then $\Phi$ has degree one.

\end{pro}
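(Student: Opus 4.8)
The plan is to equip both the source and the target copies of $\mathcal{T}(S)\times\mathcal{T}(S)$ with one fixed orientation (they are literally the same space, homeomorphic to $\mathbb{R}^{12g-12}$), so that the properness of $\Phi$ established in the previous section makes its degree well defined, via the induced map of one-point compactifications $S^{12g-12}\to S^{12g-12}$. Since this degree is invariant under \emph{proper} homotopy, it suffices to construct a proper homotopy from $\Phi$ to the identity, which has degree $+1$. I will build this homotopy explicitly out of the very earthquake paths appearing in Mess' diagram.

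First, reading the four relations of Mess' diagram together with Thurston's theorem that $E^{r}_{\lambda}=(E^{l}_{\lambda})^{-1}$, one checks that $m_{+}$ and $m_{-}$ are exactly the \emph{midpoints} of two left earthquake paths joining $\rho_{l}$ and $\rho_{r}$. Concretely, let $\mu^{+}$ (resp. $\mu^{-}$) be the unique measured lamination with $E^{l}_{\mu^{+}}(\rho_{r})=\rho_{l}$ (resp. $E^{l}_{\mu^{-}}(\rho_{l})=\rho_{r}$); these exist and depend continuously on $(\rho_{l},\rho_{r})$ by the earthquake theorem, and one gets $\mu^{\pm}=2\lambda_{\pm}$ with
\[
 m_{+}=E^{l}_{\mu^{+}/2}(\rho_{r}),\qquad m_{-}=E^{l}_{\mu^{-}/2}(\rho_{l}).
\]
I therefore define, for $t\in[\tfrac12,1]$,
\[
 \Phi_{t}(\rho_{l},\rho_{r})=\bigl(E^{l}_{t\mu^{+}}(\rho_{r}),\,E^{l}_{t\mu^{-}}(\rho_{l})\bigr).
\]
At $t=\tfrac12$ this is $\Phi$, while at $t=1$ it is $(E^{l}_{\mu^{+}}(\rho_{r}),E^{l}_{\mu^{-}}(\rho_{l}))=(\rho_{l},\rho_{r})$, the identity. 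Continuity of $(\rho_{l},\rho_{r},t)\mapsto\Phi_{t}$ follows from the continuous dependence of $\mu^{\pm}$ on the endpoints and of earthquakes on their laminations.

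The crux, and the step I expect to be the main obstacle, is to prove that $H(\rho_{l},\rho_{r},t)=\Phi_{t}(\rho_{l},\rho_{r})$ is a \emph{proper} map $\mathcal{T}(S)^{2}\times[\tfrac12,1]\to\mathcal{T}(S)^{2}$. As $[\tfrac12,1]$ is compact, this amounts to showing that if $(\rho^{n}_{l},\rho^{n}_{r})\to\infty$ and $t_{n}\in[\tfrac12,1]$ then $\Phi_{t_{n}}(\rho^{n}_{l},\rho^{n}_{r})\to\infty$. Arguing by contradiction, I suppose both coordinates stay bounded and extract $t_{n}\to t_{\infty}$. Writing the complementary earthquake $\rho^{n}_{l}=E^{l}_{(1-t_{n})\mu^{+}_{n}}\bigl(E^{l}_{t_{n}\mu^{+}_{n}}(\rho^{n}_{r})\bigr)$ and its analogue for $\rho^{n}_{r}$, and using Thurston's theorem that $\lambda\mapsto E^{l}_{\lambda}(m)$ is a homeomorphism of $\mathcal{ML}(S)$ onto $\mathcal{T}(S)$ (so that a bounded metric sheared by an unbounded lamination escapes every compact set), one forces $\mu^{\pm}_{n}\to\infty$ and reaches a contradiction in the regime $t_{n}\to1$. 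The delicate interior regime $t_{n}\to\tfrac12$ is exactly where the bounded intermediate points would be genuine convex-core midpoints, and there I invoke the properness of $\Phi$ (equivalently the geometric estimates of Propositions~2,~4 and~5). The remaining values of $t$ interpolate between these two mechanisms; the real technical difficulty is to make the escape estimate \emph{uniform} in $t\in[\tfrac12,1]$.

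Granting properness of $H$, proper homotopy invariance of the degree gives $\deg\Phi=\deg(\mathrm{id})=1$. Combined with the properness of $\Phi$, a proper map of nonzero degree between connected oriented manifolds of the same dimension is onto: a missed value would be a regular value with empty preimage and hence local degree $0$, contradicting constancy of the degree. This yields surjectivity of $\Phi$, and therefore the existence part of Mess' conjecture.
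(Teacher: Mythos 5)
Your overall framework---fix an orientation on $\mathcal{T}(S)\times\mathcal{T}(S)$, use properness to define the degree, and compute it by proper-homotopy invariance---is the same as the paper's, and your earthquake-theoretic rewriting of $\Phi$ is correct: by Mess' diagram and the flow property of earthquakes one has $\rho_{l}=E^{l}_{2\lambda_{+}}(\rho_{r})$ and $\rho_{r}=E^{l}_{2\lambda_{-}}(\rho_{l})$, so indeed $\Phi(\rho_{l},\rho_{r})=(E^{l}_{\mu^{+}/2}(\rho_{r}),E^{l}_{\mu^{-}/2}(\rho_{l}))$ with $\mu^{\pm}=2\lambda_{\pm}$ depending continuously on the holonomies. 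The genuine gap is exactly the step you yourself flag as the crux: properness of the homotopy $H$, and your sketch does not close it in either regime. For $t_{n}\to 1$, your contradiction argument only produces $\mu^{\pm}_{n}\to\infty$ and $\rho^{n}_{l},\rho^{n}_{r}\to\infty$; that is not a contradiction---it is precisely what the hypothesis $(\rho^{n}_{l},\rho^{n}_{r})\to\infty$ predicts, so nothing is ruled out. For $t_{n}\to\tfrac{1}{2}$ you propose to ``invoke the properness of $\Phi$'', but the paper's properness theorem needs the convex-core datum $m^{n}_{+}$ to stay in a compact set, whereas what you control is $E^{l}_{(2t_{n}-1)\lambda^{n}_{+}}(m^{n}_{+})$: since $\lambda^{n}_{+}\to\infty$, the lamination $(2t_{n}-1)\lambda^{n}_{+}$ need not be small even though $2t_{n}-1\to 0$, so this partial earthquake can displace $m^{n}_{+}$ arbitrarily far, and boundedness of your coordinate says nothing about $m^{n}_{+}$. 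Moreover the intermediate maps $\Phi_{t}$ have no convex-core (or any other AdS) interpretation, so none of the machinery of Propositions 2, 4, 5 applies to them; since properness at the single value $t=\tfrac{1}{2}$ is the paper's hardest theorem, there is no reason to expect the uniform escape estimate over all $t$ to come any cheaper. As written, the main claim of your proof is unproven.

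For comparison, the paper chooses a homotopy whose properness can be bought from the properness of $\Phi$ alone: it deforms $\Phi=\Phi_{-1,-1}$ through the maps $\Phi_{k_{+},k_{-}}$ of ~\cite{MR2895066}, recording the structures induced on the constant curvature $k_{\pm}$ surfaces that foliate the past and future of the convex core. These satisfy comparison inequalities $C\Phi^{+}_{k_{+},k_{-}}<\Phi^{+}_{-1,-1}$ and $C\Phi^{-}_{k_{+},k_{-}}<\Phi^{-}_{-1,-1}$, so that, together with Thurston-type compactness lemmas for dominated metrics, properness of the whole family restricted to $[-C,-1]^{2}\times\mathcal{T}(S)\times\mathcal{T}(S)$ reduces to the already-established properness of $\Phi$. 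The far end of the paper's homotopy is not the identity but the map $\Phi_{k,k^{*}}$ associated to a curvature and its dual, which is a homeomorphism, hence of degree one. If you wish to keep your earthquake homotopy, the missing ingredient is a proof that $\Phi_{t}$ is proper uniformly in $t$; that is not a detail to be ``made uniform'' at the end, it is the entire content of the proposition.
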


Recall that the degree of proper maps between manifolds is well defined and that a map whose degree is nonzero is surjective. Thus the last proposition ends the proof of the main theorem of the article.

In ~\cite{MR2895066}, they prove the existence of a map $\Phi_{k_{+},k_{-}}$ from ${\mathcal GH}(S)$ (GHMC  $AdS_{3}$ structures with Cauchy surface homeomorphic to $S$, identified with ${\mathcal T}(S) \times {\mathcal T}(S)$) to ${\mathcal T}(S) \times {\mathcal T}(S)$ which associates to a GHMC AdS3 manifold the pair of conformal structures of the unique past convex (resp. future convex) surfaces with constant sectional curvatures equal to $k_{+}$ and $k_{-}$ respectively (see also ~\cite{cyclic} where this map is considered).

They also prove that the map $\varphi$ $$(k^{+},k^{-},\rho_{l},\rho_{r}) \mapsto  \Phi_{k_{+},k_{-}}(\rho_{l},\rho_{r}) = (\Phi^{+}_{k_{+},k_{-}}(\rho_{l},\rho_{r}),\Phi^{-}_{k_{+},k_{-}}(\rho_{l},\rho_{r})) $$ is continuous (lemma 12.4) on $(-\infty,-1)^{2}\times {\mathcal T}(S) \times {\mathcal T}(S)$. 

The continuity on $(-\infty,-1]^{2}\times {\mathcal T}(S) \times {\mathcal T}(S)$ follows from the following lemma:

\begin{lem}[convergence of a sequence of hyperbolic metrics]
Let $m$ be a point of ${\mathcal T}(S)$ and $C_{n}$ be a sequence of real numbers greater than one which converge to $1$.

Then any sequence $m_{n} \in {\mathcal T}(S)$ satisfying $$m_{n} \leq C_{n}m $$ converge to m.
\end{lem}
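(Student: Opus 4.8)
The plan is to convert the pointwise comparison of metric tensors into a comparison of marked length spectra and then to invoke the rigidity of the length spectrum on ${\mathcal T}(S)$. First I would record the elementary consequence of the hypothesis: if $m_{n}\leq C_{n}m$ as quadratic forms, then for any $C^{1}$ curve $c$ one integrates $\sqrt{m_{n}(\dot c,\dot c)}\leq\sqrt{C_{n}}\sqrt{m(\dot c,\dot c)}$ along $c$ to get $l_{m_{n}}(c)\leq\sqrt{C_{n}}\,l_{m}(c)$. Applying this to the $m$-geodesic representative $c_{\gamma}$ of an arbitrary isotopy class $\gamma$, and using that the $m_{n}$-geodesic is shortest in its class, I obtain
$$ l_{m_{n}}(\gamma)\leq l_{m_{n}}(c_{\gamma})\leq\sqrt{C_{n}}\,l_{m}(\gamma)\qquad\text{for every }\gamma. $$
Since $C_{n}\to1$ this gives $\limsup_{n}l_{m_{n}}(\gamma)\leq l_{m}(\gamma)$ for every $\gamma$. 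Equivalently, in Thurston's asymmetric (Lipschitz) metric $d_{Th}$, whose value is $d_{Th}(m,m_{n})=\log\sup_{\gamma}l_{m_{n}}(\gamma)/l_{m}(\gamma)$, we have $d_{Th}(m,m_{n})\leq\tfrac{1}{2}\log C_{n}\to0$.

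The step I expect to be the main obstacle is showing that $(m_{n})$ is precompact in ${\mathcal T}(S)$, because the hypothesis only bounds lengths from above and so does not by itself prevent some curve from being pinched. To handle this I would fix a pants decomposition $\{\gamma_{1},\dots,\gamma_{3g-3}\}$ together with transversal curves $\delta_{1},\dots,\delta_{3g-3}$ with $i(\delta_{i},\gamma_{i})\geq1$, giving Fenchel--Nielsen-type coordinates. The bound above, applied to these finitely many fixed curves, bounds their $m_{n}$-lengths uniformly from above; the upper bound on the $l_{m_{n}}(\delta_{j})$ in turn bounds the twist parameters, so it suffices to keep the $l_{m_{n}}(\gamma_{i})$ away from $0$. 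If $l_{m_{n}}(\gamma_{i})\to0$ along a subsequence for some $i$, the collar lemma forces $l_{m_{n}}(\delta_{i})\geq2\operatorname{arcsinh}(1/\sinh(l_{m_{n}}(\gamma_{i})/2))\to\infty$, contradicting $l_{m_{n}}(\delta_{i})\leq\sqrt{C_{n}}\,l_{m}(\delta_{i})$. Thus the coordinates remain in a compact range and the sequence is precompact: the content is precisely that pinching any curve would make a fixed transversal arbitrarily long, which the upper bound forbids.

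Finally I would identify the limit. Let $m_{\infty}$ be any subsequential limit of $(m_{n})$. By continuity of length functions (\cite{MR787659}) and the bound of the first step, $l_{m_{\infty}}(\gamma)=\lim_{k}l_{m_{n_{k}}}(\gamma)\leq l_{m}(\gamma)$ for every simple closed curve $\gamma$, so $d_{Th}(m,m_{\infty})\leq0$. Since Thurston's metric satisfies $d_{Th}\geq0$ with $d_{Th}(x,y)=0$ if and only if $x=y$ (one cannot strictly shorten every closed geodesic on a fixed-area hyperbolic surface, as a degree-one Lipschitz map must have Lipschitz constant at least one between equal areas), this forces $m_{\infty}=m$. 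As every subsequential limit equals $m$ and the sequence is precompact, $m_{n}\to m$ in ${\mathcal T}(S)$, which is the desired conclusion. In summary, the first step is a one-line integration, the closing step is the soft nondegeneracy of $d_{Th}$, and the only genuine work is the collar-lemma argument ruling out collapse.
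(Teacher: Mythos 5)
Your proof is correct, and its skeleton coincides with the paper's: extract a convergent subsequence, show that any subsequential limit $m_{\infty}$ satisfies $l_{m_{\infty}}(\gamma)\leq l_{m}(\gamma)$ for every free homotopy class $\gamma$, and conclude $m_{\infty}=m$ from Thurston's rigidity theorem (a hyperbolic metric cannot have everywhere smaller marked length spectrum than another, by the equal-area/Lipschitz argument); your appeal to the nondegeneracy of Thurston's asymmetric metric is exactly this result, which is also what the paper cites from Thurston's stretch-maps paper. The genuine difference is in the precompactness step. The paper disposes of it in one line by quoting its Lemma 8 (``compactness of sets of metrics''), i.e. that $\{m'\in{\mathcal T}(S) : m'\leq Cm\}$ is compact, again attributed to Thurston; you instead prove this by hand: you first translate the tensor inequality $m_{n}\leq C_{n}m$ into the length-spectrum inequality $l_{m_{n}}(\gamma)\leq\sqrt{C_{n}}\,l_{m}(\gamma)$ (a passage the paper leaves implicit), then work in Fenchel--Nielsen coordinates, using the collar lemma to rule out pinching of pants curves and the bounded lengths of transversals to control the twists. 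The only step you compress is the twist-bounding claim, but that is a standard fact once the pants-curve lengths are pinned between positive bounds, so nothing is missing. What each approach buys: the paper's is shorter because Lemma 8 is needed elsewhere anyway (for the properness of the map $\varphi$ in the degree argument), so reusing it costs nothing; yours is self-contained and makes visible exactly which elementary hyperbolic-geometry inputs (collar lemma, filling curves) underlie the compactness statement being cited.
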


This is a consequence of a classical fact proved by Thurston ~\cite{1998math......1039T}. Indeed, by the following lemma, such a sequence admits a convergent subsequence, whose limit $m_{\infty}$ satisfies $m_{\infty} \leq m$. Then Thurston proved that this gives $m_{\infty}= m$.

The map $\varphi$, when restricted to any set of the form $[-C,-1]^{2}\times {\mathcal T}(S) \times {\mathcal T}(S)$ (for any $C > 1$), is proper thanks to the following lemmas (which are corollaries of results from ~\cite{1998math......1039T}):

\begin{lem}[compactness of sets of metrics first version]
Let $m$ be a point of ${\mathcal T}(S)$ and let $C>1$.

Then the set of metrics $m'$ in ${\mathcal T}(S)$ such that: $Cm'\geq m$ is compact in ${\mathcal T}(S)$.
\end{lem}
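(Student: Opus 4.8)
The plan is to convert the pointwise metric domination $Cm'\ge m$ into a one‑sided bound on marked length spectra, and then to recognise the resulting set of $m'$ as a ball for Thurston's asymmetric metric, whose compactness is among the results of ~\cite{1998math......1039T} invoked in the statement.

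First I would fix representatives. On the fixed underlying surface $S$, the inequality $Cm'\ge m$ means there are hyperbolic representatives $g\in m$ and $g'\in m'$ with $Cg'(v,v)\ge g(v,v)$ for every tangent vector $v$. Integrating along curves gives $\sqrt C\,\mathrm{len}_{g'}(\gamma)\ge \mathrm{len}_g(\gamma)$ for every curve $\gamma$. Applying this to the $g'$-geodesic $\gamma$ in a homotopy class $\alpha$, and using that the $g$-geodesic minimises $g$-length in its class, yields
$$ l_{m'}(\alpha)\ \ge\ \tfrac{1}{\sqrt C}\,l_{m}(\alpha)\qquad\text{for every closed curve }\alpha, $$
equivalently $\sup_{\alpha} l_{m}(\alpha)/l_{m'}(\alpha)\le \sqrt C$.

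Next I would read this through Thurston's minimal‑stretch theorem ~\cite{1998math......1039T}: the quantity $L(m',m)=\sup_{\alpha} l_{m}(\alpha)/l_{m'}(\alpha)$ is exactly the infimum of Lipschitz constants of maps $(S,m')\to(S,m)$ isotopic to the identity, and $d_{\mathrm{Th}}(m',m)=\log L(m',m)$ is Thurston's asymmetric metric. Hence
$$ \{\,m'\in{\mathcal T}(S): Cm'\ge m\,\}\ \subseteq\ \{\,m'\in{\mathcal T}(S): d_{\mathrm{Th}}(m',m)\le \tfrac12\log C\,\}, $$
a closed ball about $m$. The left‑hand set is closed since length functions are continuous, so it suffices to know that such balls are compact, which is the properness of Thurston's metric proved in ~\cite{1998math......1039T}.

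The heart of the matter is the compactness of the ball, and it is worth isolating why the one‑sided bound already suffices, since this is the step I expect to carry the real weight. The lower bound $l_{m'}(\alpha)\ge l_{m}(\alpha)/\sqrt C$ alone keeps the systole of $m'$ bounded below by $\mathrm{sys}(m)/\sqrt C$, which by Mumford's criterion confines $m'$ to a compact part of moduli space but not yet of ${\mathcal T}(S)$; the missing ingredient is marking control. If the markings escaped, one could write $m'_{n}=\phi_{n}\cdot\tilde m_{n}$ with $\phi_{n}\to\infty$ in the mapping class group and $\tilde m_{n}$ in a compact set, whence for a basepoint $m_0$ the uniform estimate $l_{m}(\phi_{n}\beta)\le \sqrt C\,l_{\tilde m_{n}}(\beta)\le \sqrt C\,M\,l_{m_0}(\beta)$ holds for all $\beta$. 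This bounds $d_{\mathrm{Th}}\!\bigl(m_0,\phi_{n}^{-1}\!\cdot m\bigr)$ uniformly, forcing the orbit points $\phi_{n}^{-1}\!\cdot m$ into a fixed compact set and contradicting the proper discontinuity of the mapping class group action. This is precisely the content of the properness of Thurston's asymmetric metric in ~\cite{1998math......1039T}; the reductions above it are routine, so the whole difficulty is concentrated in this marking‑control step.
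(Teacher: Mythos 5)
Your proof is correct, but it cannot be compared line-by-line with ``the paper's own proof'' for a simple reason: the paper gives none. Lemma~8 is stated as a corollary of results from Thurston's minimal stretch maps preprint, with no derivation supplied. Your proposal fills exactly the gap that the paper's citation papers over, and it does so with the toolkit that citation points to: the chain (pointwise domination $Cm'\geq m$) $\Rightarrow$ (marked length spectrum bound $l_{m'}\geq C^{-1/2}l_{m}$, with the correct factor $\sqrt{C}$ from the scaling of lengths versus metric tensors) $\Rightarrow$ (containment in a backward ball of Thurston's asymmetric metric) is the natural reduction, and your last step is where the real content lies. You correctly identify that the one-sided lower bound on lengths only gives Mumford compactness in moduli space, and that the essential point is marking control: writing a divergent sequence as $m'_{n}=\phi_{n}\cdot\tilde m_{n}$ with $\tilde m_{n}$ in a compact set, transferring the length bound to $l_{\phi_{n}^{-1}\cdot m}(\beta)\leq \sqrt{C}\,M\,l_{m_{0}}(\beta)$, and contradicting proper discontinuity of the mapping class group action. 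Note also that your argument is not circular: the compactness you invoke for the orbit points $\phi_{n}^{-1}\cdot m$ is that of \emph{forward} balls (length spectra bounded \emph{above}, which is elementary via a filling family of curves), while what you are proving is compactness of \emph{backward} balls. The one loose end is your assertion that the set $\{m'\in{\mathcal T}(S): Cm'\geq m\}$ is closed ``since length functions are continuous'': continuity of length functions shows that the larger set $\{m': \sqrt{C}\,l_{m'}\geq l_{m}\}$ is closed, whereas closedness of the set defined by pointwise domination of representatives would require an additional compactness argument for the representatives themselves. This is harmless here: your containment already gives precompactness, which is all that the paper's application (properness of the map $\varphi$ on $[-C,-1]^{2}\times{\mathcal T}(S)\times{\mathcal T}(S)$) actually uses.
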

The same lemma is true when considering metrics $m'$ such that $m'\leq Cm$.

By a slight extention we get the following lemma:

\begin{lem}[compactness of sets of metrics revisited]
Let $K$ be a compact of ${\mathcal T}(S)$, let $C>1$. Then the set of metrics $m'$ such that $Cm'>m$ for some $m \in K$ is compact.
\end{lem}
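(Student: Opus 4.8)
The plan is to deduce the new statement from the previous lemma (compactness, first version) by a compactness-of-$K$ argument, producing a single comparison metric dominating all members of $K$. First I would invoke the preceding lemma in its stated form: for a fixed $m\in\mathcal{T}(S)$ and $C>1$, the set of $m'\in\mathcal{T}(S)$ with $Cm'\geq m$ is compact. The difficulty in the present statement is that $m$ is allowed to range over an entire compact set $K$, not a single point, so one cannot apply the earlier lemma verbatim. The key idea I would use is a domination trick: since length functions $\ell_{m}(\gamma)$ depend continuously on $m$ and $K$ is compact, for each simple closed curve $\gamma$ the quantity $\sup_{m\in K}\ell_{m}(\gamma)$ is finite, and in fact one can find a single metric $m_{0}\in\mathcal{T}(S)$ (or a fixed constant multiple of one) that dominates every $m\in K$ in the sense of the partial order used here, i.e. $m\leq m_{0}$ for all $m\in K$.

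Concretely, I would proceed as follows. Fix any $m_{0}\in\mathcal{T}(S)$ and choose a constant $D>1$ such that $m\leq D\,m_{0}$ for every $m\in K$; the existence of such a $D$ follows from compactness of $K$ together with the fact that the relation $m\leq t\,m_{0}$ (comparison of extremal/hyperbolic lengths) is an open-in-$t$, closed-under-uniform-bounds condition, so that the continuous function $m\mapsto \inf\{t : m\leq t\,m_{0}\}$ attains a finite maximum $D$ on $K$. Then for any $m'$ in our set there is some $m\in K$ with $Cm'>m$, and combining with $m\le D\,m_0$ is not quite in the right direction, so instead I would use the \emph{other} comparison: choose $m_{0}$ and $D$ so that $m_{0}\le D\,m$ for all $m\in K$ simultaneously (again by compactness and continuity), which gives $CD\,m' > m_{0}$. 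Hence every such $m'$ satisfies $(CD)\,m' > m_{0}$ for the \emph{single} fixed metric $m_{0}$, so the whole set is contained in the set $\{m' : (CD)m' \geq m_{0}\}$, which is compact by the first version of the lemma.

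Finally I would close the argument by noting that the set in question is also closed: it is the preimage of a closed condition under the continuous dependence of length functions on $(m',m)$ and the closedness of the comparison relation, intersected with a compact set; a closed subset of a compact set is compact. The main obstacle is the first step, namely producing the uniform comparison constant $D$ valid for all $m\in K$ at once. This is where compactness of $K$ must be used essentially: one needs that the order relation $m_{0}\le t\,m$ behaves well in families, i.e. that $m\mapsto\inf\{t: m_{0}\le t\,m\}$ is upper semicontinuous (hence bounded on $K$), which in turn rests on the continuity of hyperbolic/extremal length functions established by Thurston in the cited work. Once this uniform domination is in hand, the reduction to the single-metric case and the invocation of the earlier compactness lemma are routine.
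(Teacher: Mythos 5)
Your proposal is correct and is essentially the argument the paper intends: the paper gives no actual proof of this lemma (it is merely asserted to follow ``by a slight extension'' of the first compactness lemma), and your reduction --- using compactness of $K$ and continuity of the length-ratio comparison to produce a single $m_{0}$ and $D>1$ with $m_{0}\leq D\,m$ for every $m\in K$, whence the whole set is contained in $\{m' : CD\,m'\geq m_{0}\}$, which is compact by the first version --- is exactly that slight extension made precise. One caveat: with the strict inequality $>$ as the paper writes it, the set need not be closed (a limit of points in the set only satisfies the non-strict inequality $\geq$ after extracting a convergent subsequence in $K$), so your final closedness step, and indeed the lemma as literally stated, should be read with $\geq$ as in the first version, or with ``compact'' weakened to ``relatively compact''; the containment in a compact set, which is the real content, is what your argument correctly establishes.
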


Consider the slicing of an $AdS$ GHMC manifold given by the map $\varphi$ . For any $k< -1$, let $k^{*}$ be the curvature of the dual to the surface of curvature $k$. Then we have:(formula for $k*$)

Let us then prove the properness of the map $\varphi$. Let $(m_{n}^{+})$ and $(m_{n}^{-})$ be convergent sequences in ${\mathcal T}(S)$ and $\rho_{n}^{l}$, $\rho_{n}^{r}$, $k_{n}^{+}$ and $k_{n}^{-}$ be sequences such that the image of $$(k_{n}^{+},k_{n}^{-},\rho_{n}^{l},\rho_{n}^{r})$$ by $\varphi$ is $((m_{n}^{+}),(m_{n}^{-}))$.

By properness of $\Phi_{-1,-1} = \Phi$ (proved in the previous section) and the fact that $$ C\Phi^{+}_{k_{+},k_{-}} < \Phi^{+}_{-1,-1}$$ and $$ C\Phi^{-}_{k_{+},k_{-}} < \Phi^{-}_{-1,-1}, $$ the sequences $k_{n}^{+}$,$k_{n}^{-}$,$\rho_{n}^{l}$,$\rho_{n}^{r}$ stay in a compact set, hence they admit convergent subsequences.

By invariance of the degree of a map under a proper homotopy, all the maps $\Phi_{k_{+},k_{-}}$ have the same degree, which is given by this last lemma:

\begin{lem}[degree of $\Phi_{k,k^{*}}$]
The map $\Phi_{k,k^{*}}$ has degree one
\end{lem}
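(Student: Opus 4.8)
The strategy is to pick one convenient pair $(k,k^*)$, prove that $\Phi_{k,k^*}$ is a homeomorphism for that choice, and conclude that its degree is $+1$; since all the maps $\Phi_{k_+,k_-}$ are properly homotopic (by the continuity and properness of $\varphi$ established above), this simultaneously computes the degree of $\Phi=\Phi_{-1,-1}$. The first step is to write down the duality dictionary for constant curvature surfaces in $AdS_3$. To a future-convex surface $\Sigma$ of constant curvature $k<-1$ the polar (Gauss) duality assigns a past-convex dual surface $\Sigma^*$ whose shape operator is $B^{-1}$; since the Gauss equation reads $\det B = -(k+1)$, the dual has constant curvature $k^* = -k/(k+1)$, and the first fundamental form of $\Sigma^*$ is the third fundamental form $III = I(B\cdot,B\cdot)$ of $\Sigma$. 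The Gauss map $G\colon \Sigma \to \Sigma^*$ is a diffeomorphism equivariant for the common holonomy of $M$, so for a fixed GHMC structure the future-convex $k$-surface and the past-convex $k^*$-surface are exactly dual to one another. The relation $k^*=-k/(k+1)$ has the self-dual fixed point $k=k^*=-2$, and this is the value I would use: at $k=-2$ one has $\det B=1$, the future and past $(-2)$-surfaces are mutually dual, and $\Phi_{-2,-2}$ is symmetric under duality and time reversal.

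Next I would identify $\Phi_{-2,-2}$ with a homeomorphism of $\mathcal{T}(S)\times\mathcal{T}(S)$. The embedding data $(I,B)$ of the future $(-2)$-surface satisfy the Gauss--Codazzi equations with $\det B = 1$, and this data is equivalent to the pair of metrics $(I, III)$, hence to the pair of rescaled classes $(m_+,m_-)=\Phi_{-2,-2}(\rho_l,\rho_r)$. The key point — which is the surface-group analogue of the landslide correspondence of Bonsante--Mondello--Schlenker — is that such a compatible pair of constant curvature metrics reconstructs a unique equivariant $(-2)$-surface, and therefore a unique pair $(\rho_l,\rho_r)$ of Fuchsian holonomies. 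Spelling out this reconstruction exhibits an inverse for $\Phi_{-2,-2}$, so that the already-known continuity and properness upgrade it to a continuous proper bijection, i.e. a homeomorphism.

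Finally, a homeomorphism between oriented manifolds has degree $\pm1$, and I would fix the sign to $+1$ by evaluating on the Fuchsian locus. When $\rho_l=\rho_r=\rho$ the spacetime is static, both $(-2)$-surfaces are umbilic equidistants at timelike distance $\pi/4$ from the central totally geodesic surface, their induced metric is $\tfrac12 m_\rho$, and so $\Phi_{-2,-2}(\rho,\rho)=(m_\rho,m_\rho)$. Thus $\Phi_{-2,-2}$ restricts to the identity of the diagonal with orientation-preserving differential, which forces the degree to be $+1$.

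The main obstacle is the bijectivity in the second paragraph, and concretely the \emph{injectivity}: one must show that the pair $(m_+,m_-)$ recovers the shape operator $B$, and hence the immersed $(-2)$-surface, uniquely. This is precisely where uniqueness of constant curvature surfaces with prescribed first and third fundamental forms enters, and it is the delicate analytic content of the argument. If one prefers not to invoke this uniqueness as a black box, the alternative route is to compute the differential $d\Phi_{-2,-2}$ and verify that it is everywhere invertible; then properness together with the connectedness of the target makes $\Phi_{-2,-2}$ a covering map, whose number of sheets is read off from the single-sheeted Fuchsian fiber and is again $1$.
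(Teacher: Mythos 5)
Your proposal takes essentially the same route as the paper, whose entire proof is the one-line observation that $\Phi_{k,k^{*}}$ is in fact a homeomorphism --- precisely the duality-plus-landslide argument you spell out (specialized to the self-dual value $k=-2$), resting on the same cited results of \cite{MR2895066} and \cite{cyclic}. The only weak step is your sign determination: a map restricting to the identity on the diagonal need not be orientation-preserving (compare $(x,y)\mapsto(x,-y)$ on $\mathbb{R}^{2}$, which fixes the $x$-axis), but this is immaterial here since degree $\pm 1$ already yields surjectivity, and the paper itself is no more precise on this point.
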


In fact it is a homeomorphism.

\bibliography{biblio2}
\bibliographystyle{plain}
\end{document}